\newtheorem{thm}{Theorem}[section]
\newtheorem{lem}[thm]{Lemma}
\newtheorem{pro}[thm]{Proposition}
\newtheorem{cor}[thm]{Corollary}
\DeclareMathOperator{\id}{id}
\DeclareMathOperator{\Ker}{Ker}
\DeclareMathOperator{\im}{Im}
\DeclareMathOperator{\Hom}{Hom}
\DeclareMathOperator{\End}{End}
\DeclareMathOperator{\ext}{Ext}
\DeclareMathOperator{\ann}{Ann}
\DeclareMathOperator{\jac}{Jac}
\newcommand{\D}{\mathbb{D}}
\newcommand{\F}{\mathbb{F}}
\newcommand{\argu}{\hbox to 7truept{\hrulefill}}
\begin{document}


\title[Split strongly abelian $p$-chief factors and restricted cohomology]{Split strongly
abelian $p$-chief factors and first degree restricted cohomology}

\author{J\"org Feldvoss}
\address{Department of Mathematics and Statistics, University of South Alabama,
Mobile, AL 36688--0002, USA}
\email{jfeldvoss@southalabama.edu}

\author{Salvatore Siciliano}
\address{Dipartimento di Matematica e Fisica ``Ennio De Giorgi", Universit\`a del Salento,
Via Provinciale Lecce-Arnesano, I-73100 Lecce, Italy}
\email{salvatore.siciliano@unisalento.it}

\author{Thomas Weigel}
\address{Dipartimento di Matematica e Applicazioni, Universit\`a degli Studi di Milano-Bicocca,
Via Roberto Cozzi, No.\ 53, I-20125 Milano, Italy}
\email{thomas.weigel@unimib.it}

\subjclass[2000]{17B05, 17B30, 17B50, 17B55, 17B56}

\keywords{Solvable restricted Lie algebra, irreducible module, $p$-chief factor, strongly
abelian $p$-chief factor, split $p$-chief factor,  multiplicity of a split strongly abelian
$p$-chief factor, restricted cohomology, transgression, principal block, projective
indecomposable module, Loewy layer}


\begin{abstract}
In this paper we investigate the relation between the multiplicities of split strongly
abelian $p$-chief factors of finite-dimensional restricted Lie algebras and first
degree restricted cohomology. As an application we obtain a characterization of
solvable restricted Lie algebras in terms of the multiplicities of split strongly abelian
$p$-chief factors. Moreover, we derive some results in the representation theory
of restricted Lie algebras related to the principal block and the projective cover of
the trivial irreducible module of a finite-dimensional restricted Lie algebra. In particular,
we obtain a characterization of finite-dimensional solvable restricted Lie algebras in
terms of the second Loewy layer of the projective cover of the trivial irreducible
module. The analogues of these results are well known in the modular representation
theory of finite groups.
\end{abstract}

      
\date{January 21, 2013}
          
\maketitle


\section{Introduction} 


Let $p$ be an arbitrary prime number, and let $G$ be a finite group whose order
is divisible by $p$. Moreover, let $\F_p[G]$ denote the group algebra of $G$ over
the field $\F_p$ with $p$ elements, and let $S$ be an irreducible (unital left) $\F_p
[G]$-module. Then $[G:S]_{\rm p-split}$ denotes the number of {\em $p$-elementary
abelian chief factors\/} or for short {\em $p$-chief factors\/} $G_j/G_{j-1}$ ($1\le j
\le n$) of a given chief series $\{1\}=G_0\subset G_1\subset\cdots\subset G_n=G$
that are isomorphic to $S$ as $\F_p[G]$-modules and for which the exact sequence
$\{1\}\to G_j/G_{j-1}\to G/G_{j-1}\to G/G_j\to\{1\}$ splits in the category of groups.
It is well known that $[G:S]_{\rm p-split}$ is independent of the choice of the chief
series of $G$ (see also Theorem \ref{stammbach} below). 

W.\ Gasch\"utz proved the ``only if"-part of the following result on split (or complementable)
$p$-chief factors of finite $p$-solvable groups (see \cite[Theorem VII.15.5]{HB}).
The converse of Gasch\"utz' theorem is due to U.\ Stammbach \cite[Corollary 1]{S2}),
but in an equivalent form it was already proved earlier by W.\ Willems \cite[Theorem~3.9]{W}.

\begin{thm}\label{gaschuetz}
A finite group $G$ is $p$-solvable if, and only if,  $\dim_{\F_p}H^1(G,S)=\dim_{\F_p}
\End_{\F_p[G]}(S)\cdot [G:S]_{\rm p-split}$ holds for every irreducible $\F_p[G]$-module
$S$.
\end{thm}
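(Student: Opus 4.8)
The plan is to set $E\deq\End_{\F_p[G]}(S)$, which is a division algebra over $\F_p$ by Schur's lemma, and to regard $H^1(G,S)$ as a right $E$-module; since $e\deq\dim_{\F_p}E$ divides $\dim_{\F_p}H^1(G,S)$, the asserted identity is equivalent to $\dim_E H^1(G,S)=[G:S]_{\rm p-split}$. I would prove this by induction on $|G|$, the engine being a minimal normal subgroup $N\trianglelefteq G$ together with the five-term inflation--restriction sequence
\[
0\longrightarrow H^1(G/N,S^N)\longrightarrow H^1(G,S)\longrightarrow H^1(N,S)^{G/N}\xrightarrow{\ \mathrm{tg}\ } H^2(G/N,S^N)
\]
coming from the Lyndon--Hochschild--Serre spectral sequence, and the special behaviour of normal $p$- and $p'$-subgroups with $\F_p$-coefficients. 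A useful first reduction is that both sides vanish unless $S$ lies in the principal block of $\F_p[G]$, so the comparison really takes place there.

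For the universal lower bound $\dim_E H^1(G,S)\ge[G:S]_{\rm p-split}$, valid for all finite $G$ and simultaneously recovering the independence from the chief series recorded in Theorem~\ref{stammbach}, I would attach to each complemented $p$-chief factor $A\cong S$ of a fixed chief series a canonical $E$-line in $H^1(G,S)$: a complement to $A$ in the relevant section provides a splitting that forces the corresponding transgression to vanish, so the restriction map places a copy of $\Hom_G(A,S)\cong E$ inside $H^1(G,S)$, and the chief-series filtration shows that the lines attached to distinct factors are $E$-independent.

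The forward (\emph{only if}) implication then reduces to the reverse inequality $\dim_E H^1(G,S)\le[G:S]_{\rm p-split}$ for $p$-solvable $G$. Here I would use that a minimal normal subgroup $N$ of a $p$-solvable group is either an elementary abelian $p$-group or a $p'$-group. If $N$ is a $p'$-group then $H^i(N,-)=0$ for all $i>0$, so inflation gives $H^1(G,S)\cong H^1(G/N,S^N)$; when $N$ acts trivially on $S$ this reduces the claim to $G/N$, and when $N$ acts nontrivially one has $S^N=0$, whence $H^1(G,S)=0$ and the lower bound forces $[G:S]_{\rm p-split}=0$ as well. If $N$ is an elementary abelian $p$-group then, being a normal $p$-subgroup, it acts trivially on $S$, so $S^N=S$ and the sequence above yields $\dim_E H^1(G,S)\le\dim_E H^1(G/N,S)+\dim_E\Hom_G(N,S)$, where the last term is $1$ if $N\cong S$ and $0$ otherwise. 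The decisive point is that $\mathrm{tg}$ carries a generator of the one-dimensional $E$-space $\Hom_G(N,S)$ to the class of the extension $1\to N\to G\to G/N\to 1$; hence $\mathrm{tg}$ is injective exactly when this extension does not split, i.e.\ exactly when the $p$-chief factor $N\cong S$ is not complemented. Thus the contribution of the bottom factor to $\dim_E H^1(G,S)$ matches its contribution to $[G:S]_{\rm p-split}$, and induction on $G/N$ closes the estimate.

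For the reverse (\emph{if}) implication I would argue contrapositively. The clean local input is that if $T$ is a nonabelian simple group with $p\mid|T|$, then the trivial module is non-projective, so $\operatorname{rad}(P_{\F_p})\ne0$ and, by Nakayama, its top is nonzero; hence $H^1(T,V)=\ext^1_{\F_p[T]}(\F_p,V)\ne0$ for some irreducible $V$, while the only chief factor of $T$ is $T$ itself and is not a $p$-chief factor, giving the strict inequality $\dim_E H^1(T,V)>0=[T:V]_{\rm p-split}$. For a general non-$p$-solvable $G$ I would pass to the section $G/R$, where $R$ is the $p$-solvable radical; its socle contains a minimal normal subgroup $\bar N\cong T_1\times\cdots\times T_k$ with each $T_i$ nonabelian simple and $p\mid|T_i|$, a chief factor that is not a $p$-chief factor. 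The main obstacle is precisely here: one must propagate the excess first cohomology of $\bar N$ to an irreducible $\F_p[G]$-module $S$ for $G$ itself, via inflation and Clifford theory, and check that this $S$ cannot be realized as a complemented $p$-chief factor, so that $\dim_E H^1(G,S)>0=[G:S]_{\rm p-split}$. Controlling this propagation---ensuring that the non-$p$-solvable chief factor genuinely forces cohomology beyond what split $p$-chief factors can account for---is the technical heart of the converse (Willems' theorem) and the step I expect to be hardest.
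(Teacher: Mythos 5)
First, a point of reference: the paper itself contains no proof of Theorem~\ref{gaschuetz}; it quotes the ``only if'' part from Gasch\"utz \cite[Theorem VII.15.5]{HB} and the converse from Stammbach \cite[Corollary 1]{S2} and Willems \cite[Theorem 3.9]{W}, and what it actually proves are the restricted Lie algebra analogues. Measured against that route, your ``only if'' direction is sound, and it is essentially the same machinery the paper deploys for Theorem~\ref{pabsplit}: induction along a chief series, the five-term inflation--restriction sequence, and the key observation (the group analogue of Lemma~\ref{trans}) that the transgression on the one-dimensional $\D$-space $\Hom_G(N,S)$, $\D:=\End_{\F_p[G]}(S)$, sends $\id_N$ to the class of the extension $1\to N\to G\to G/N\to 1$, hence is zero or injective according to whether the bottom $p$-chief factor is complemented. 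One overclaim to correct: your universal lower bound $\dim_\D H^1(G,S)\ge [G:S]_{\rm p-split}$ does not by itself ``recover'' the independence from the chief series asserted in Theorem~\ref{stammbach}; only the exact formula of that theorem yields this, since a lower bound valid for every series does not force the counts for two different series to agree.

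The genuine gap is the converse, and you flag it yourself: the ``propagation'' step you defer is not a technical loose end but is precisely Willems' theorem, i.e.\ the entire content of the ``if'' direction. Worse, the route you propose---inflate an irreducible module with nonvanishing $H^1$ from $G/R$, where $R$ is the $p$-solvable radical, and ``check'' that $[G:S]_{\rm p-split}=0$---fails for the most natural candidates, so the check cannot be routine. By Theorem~\ref{stammbach}, equality for a given $S$ fails if and only if $H^1(G/C_G(S),S)\neq 0$; in particular equality holds automatically for the trivial module (where $C_G(S)=G$), no matter how far $G$ is from being $p$-solvable. Concretely, take $G=S_5$ and $p=2$: then $R=1$, and $S=\F_2$ is an irreducible module inflated from $G/R$ with $H^1(G,S)\cong\Hom(S_5,\F_2)\neq 0$, yet $[G:S]_{\rm 2-split}=1=\dim_{\F_2}H^1(G,S)$ (the complemented factor $S_5/A_5$ absorbs the cohomology), so this $S$ is not a witness. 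What must actually be proved is: if $G$ is not $p$-solvable, then $H^1(G/C_G(S),S)\neq 0$ for some irreducible $\F_p[G]$-module $S$---that is, the hard direction of Stammbach's cohomological characterization of $p$-solvability \cite[Theorem A]{S1}---which one then combines with the formula of Theorem~\ref{stammbach}. This is exactly how the cited literature closes the argument, and it is the pattern the paper follows in the restricted setting (Theorem~\ref{pabsplit} combined with the characterization of solvability from \cite[Theorem 5.5]{FSW} to get Theorem~\ref{charsolv}); your outline leaves this step open, and with it the theorem.
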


Let $C_G (M):=\{g\in G\mid g\cdot m=m\mbox{ for every }m\in M\}$ denote the {\em
centralizer\/} of an $\F_p[G]$-module $M$ in $G$. In order to be able to apply his
cohomological characterization of $p$-solvable groups (see \cite[Theorem A]{S1}) in
the proof of Theorem \ref{gaschuetz}, Stammbach established the following result
(see the main result of \cite{S2}):

\begin{thm}\label{stammbach}
Let $G$ be a finite group, and let $S$ be an irreducible $\F_p[G]$-module with
centralizer algebra $\D:=\End_{\F_p[G]}(S)$. Then
\begin{equation*}
[G:S]_{\rm p-split}=\dim_\D H^1(G,S)-\dim_\D H^1(G/C_G(S),S)
\end{equation*}
holds. In particular, $[G:S]_{\rm p-split}$ is independent of the choice of the chief series
of $G$.
\end{thm}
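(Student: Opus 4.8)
The plan is to reduce the formula to the inflation--restriction sequence and then to identify the surviving cohomology with the complemented chief factors. First note that the final assertion is automatic: once the displayed identity is proved, its right-hand side visibly does not involve the chosen chief series, so $[G:S]_{\rm p-split}$ is a series invariant, and everything reduces to the cohomological equality. Set $K:=C_G(S)$, a normal subgroup of $G$ on which $S$ carries the trivial action, so that $S^K=S$. Feeding the extension $\{1\}\to K\to G\to G/K\to\{1\}$ into the five-term exact sequence of the Lyndon--Hochschild--Serre spectral sequence gives the $\D$-linear sequence
\[
0\to H^1(G/K,S)\xrightarrow{\ \mathrm{inf}\ }H^1(G,S)\xrightarrow{\ \mathrm{res}\ }H^1(K,S)^{G/K}\xrightarrow{\ \mathrm{tra}\ }H^2(G/K,S).
\]
Since inflation is injective with image isomorphic to $H^1(G/K,S)$, I obtain $\dim_\D H^1(G,S)-\dim_\D H^1(G/K,S)=\dim_\D\im(\mathrm{res})=\dim_\D\Ker(\mathrm{tra})$, and the task becomes to prove that this number equals $[G:S]_{\rm p-split}$.

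The pivotal structural observation is that every chief factor $G_j/G_{j-1}$ which is $G$-isomorphic to $S$ already satisfies $G_j\subseteq K$. Indeed, a group acts trivially by conjugation on any of its own abelian quotients, so $G_j$ centralizes $G_j/G_{j-1}\cong S$; as the $G$-action on this factor is that on $S$, every element of $G_j$ acts trivially on $S$ and hence lies in $C_G(S)=K$. Consequently I may choose the chief series to pass through $K$, and then the factors isomorphic to $S$ are precisely the $G$-chief factors of $K$ of type $S$. On the cohomological side, because $K$ acts trivially on $S$ one has $H^1(K,S)^{G/K}=\Hom_{\F_p[G]}(K/[K,K]K^p,S)$, a vector space over the division ring $\D$ whose dimension counts, with the correct multiplicity, the copies of $S$ in the head of the $G$-module $K/[K,K]K^p$.

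It remains to match $\Ker(\mathrm{tra})$ with the complemented factors, and this is the heart of the matter. A $G$-homomorphism $\phi\colon K/[K,K]K^p\to S$ singles out a chief factor of type $S$ inside $K$; I would show, via Gasch\"utz' theory of complements, that $\phi$ extends to a crossed homomorphism $G\to S$---equivalently $\mathrm{tra}(\phi)=0$---exactly when the associated short exact sequence of groups splits, so that $\Ker(\mathrm{tra})$ is spanned over $\D$ by the classes detecting complemented chief factors of type $S$. The main obstacle is the bookkeeping in this identification: one must prove that the transgression obstruction in $H^2(G/K,S)$ coincides with the group-theoretic splitting obstruction, that each complemented factor of type $S$ contributes exactly $\dim_\D\D=1$ to $\dim_\D\Ker(\mathrm{tra})$, and that distinct complemented factors yield $\D$-independent classes. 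I expect to handle this by induction along the chief series of $K$, peeling off one minimal $G$-invariant section at a time and using the compatibility of transgression with the inductive steps together with Gasch\"utz' criterion---a normal elementary abelian $p$-section is complemented if and only if the associated cohomology class vanishes, the complements then forming a torsor under the corresponding $H^1$. Combining this with the reduction of the first paragraph yields $[G:S]_{\rm p-split}=\dim_\D\Ker(\mathrm{tra})=\dim_\D H^1(G,S)-\dim_\D H^1(G/C_G(S),S)$.
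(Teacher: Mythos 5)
Your opening reduction is sound: setting $K=C_G(S)$, the five-term sequence for $1\to K\to G\to G/K\to 1$ does give $\dim_\D H^1(G,S)-\dim_\D H^1(G/K,S)=\dim_\D\Ker(\mathrm{tra})$, and your observation that every chief factor of type $S$ lies below $K$ is correct. But from that point on the proposal stops being a proof. The identification $\dim_\D\Ker(\mathrm{tra})=[G:S]_{\rm p-split}$ \emph{is} the theorem, and your text handles it entirely in the conditional mood (``I would show\ldots'', ``I expect to handle this by induction\ldots''). Concretely, three things are asserted but never established: (1) that to each chief factor $G_j/G_{j-1}\cong S$ of the given series one can attach a class in $H^1(K,S)^{G/K}$ whose transgression vanishes if and only if $1\to G_j/G_{j-1}\to G/G_{j-1}\to G/G_j\to 1$ splits --- note that your $\phi$ is an arbitrary $G$-map $K/[K,K]K^p\to S$, which need not ``single out'' any member of the series, and the extension it determines is not one of the extensions in question; (2) that a split factor contributes exactly $1$ to $\dim_\D\Ker(\mathrm{tra})$ and a non-split one contributes $0$; (3) that distinct split factors give $\D$-independent classes. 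Moreover, the plan to induct ``along the chief series of $K$'' while keeping the single transgression $H^1(K,S)^{G/K}\to H^2(G/K,S)$ fixed has a structural defect: peeling off a minimal normal subgroup changes the ambient group, so each step requires a fresh five-term sequence for $G/G_{j-1}$ together with a compatibility argument relating it to the previous one. That inductive machine is precisely the proof of Stammbach that the paper cites for this statement, and which the paper reproduces in the restricted Lie setting as Theorem \ref{pabsplit}: there one inducts on the given series itself, applies the five-term sequence to the bottom minimal ($p$-)ideal $L_1$, and uses the dichotomy of Lemma \ref{trans} --- transgression zero if the factor splits, injective (hence, since $\dim_\D\Hom_L(L_1,L_1)=1$, killing exactly one $\D$-dimension) if it does not --- to account for each factor one at a time.

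There is also a circularity in your framing. You correctly note that independence of the chief series would follow once the displayed identity is proved, but you then prove the identity only for a chief series \emph{chosen} to pass through $K$. The theorem asserts the identity for an arbitrary given chief series; splitness of a chief factor is not a Jordan--H\"older invariant a priori (that invariance is exactly the ``in particular'' conclusion), so you cannot transfer the identity from your preferred series to any other. As it stands, even granting the unproved counting step, your argument shows only that \emph{some} chief series has its split count equal to $\dim_\D H^1(G,S)-\dim_\D H^1(G/C_G(S),S)$. The remedy is the one used in the paper's proof of Theorem \ref{pabsplit}: run the induction on the series you are handed, making no choices, so that the formula --- whose right-hand side is manifestly series-independent --- is established for every series simultaneously.
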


The goal of this paper is to investigate whether analogues of Theorem \ref{gaschuetz}
and Theorem \ref{stammbach} hold in the context of restricted Lie algebras. Recently,
the authors have obtained analogues of these results for ordinary Lie algebras (see
the equivalence (i)$\Longleftrightarrow$(iii) in \cite[Theorem 4.3]{FSW} and \cite[Theorem
2.1]{FSW}, respectively). The main result of this paper is a restricted analogue of Theorem
\ref{stammbach} (see Theorem \ref{pabsplit}) from which all the other results will follow.
The proof given here follows the argument used in the proof of \cite[Theorem 2.1]{FSW}
very closely. An important tool in the proof is a one-to-one correspondence between the
set of equivalence classes of restricted extensions of a strongly abelian restricted Lie algebra
$M$ by a restricted Lie algebra $L$ acting on $M$ and the second restricted cohomology
space $H_*^2(L,M)$ that is defined via the transgression in the five-term exact sequence
associated to the restricted extension of $M$ by $L$.

As a consequence of Theorem~\ref{pabsplit} and the equivalence (i)$\Longleftrightarrow$(iv)
in \cite[Theorem 5.5]{FSW}, we obtain the analogue of Theorem \ref{gaschuetz} for split
strongly abelian $p$-chief factors of restricted Lie algebras (see Theorem \ref{charsolv}).
In the final section we apply the results obtained in Section 2 to the second Loewy layer
of the projective cover of the trivial irreducible module. The equivalence (i)$\Longleftrightarrow$(ii)
in Theorem \ref{LLPIM} is an analogue of Willems' module-theoretic characterization of
$p$-solvable groups (see \cite[Theorem 3.9]{W} and also \cite[Corollary~2]{S2}) for
restricted Lie algebras.

Let $\langle X\rangle_{\F}$ denote the $\F$-subspace of a vector space $V$ over a field
$\F$ spanned by a subset $X$ of $V$. For more notation and some well-known results
from the structure and representation theory of restricted Lie algebras we refer the reader
to Chapters 2 and 5 in \cite{SF}.


\section{Split strongly abelian $p$-chief factors and restricted cohomology}


In analogy to group theory we define a {\em $p$-chief series\/} for a finite-dimensional
restricted Lie algebra $L$ to be an ascending chain $0=L_0\subset L_1\subset\cdots
\subset L_n=L$ of $p$-ideals in $L$ such that $L_j/L_{j-1}$ is a minimal (non-zero)
$p$-ideal of $L/L_{j-1}$ for every integer $j$ with $1\le j\le n$. Any $L_j/L_{j-1}$ is
then called a {\em $p$-chief factor\/} of $L$. We say that $L_j/L_{j-1}$ is a {\it
strongly abelian $p$-chief factor\/} if it is an abelian Lie algebra with zero $p$-mapping
(see \cite[p.\ 565]{H} for the notion of a strongly abelian restricted Lie algebra).

Observe that strongly abelian $p$-chief factors are irreducible restricted modules but
this is not the case for arbitrary $p$-chief factors. Let $S$ be a simple Lie algebra that
is not restrictable, and let $L$ be the minimal $p$-envelope of $S$. Then $L$ has no
non-zero proper $p$-ideals (see \cite[Proposition 1.4(1)]{F3}), and therefore $L$ is
a $p$-chief factor of $L$ which is not irreducible as an $L$-module, because $S$ is a
non-zero proper $L$-submodule of $L$ (see \cite[Proposition 1.1(1)]{F3}). Note also
that every $p$-chief factor of a solvable restricted Lie algebra is abelian but not
necessarily strongly abelian as any non-zero torus shows.

For an irreducible $L$-module $S$ and a given $p$-chief series $0=L_0\subset L_1
\subset\cdots\subset L_n=L$ of $L$ we denote by $[L:S]_{\rm p-split}$ the number
of strongly abelian $p$-chief factors $L_j/L_{j-1}$ that are isomorphic to $S$ as an
$L$-module and for which the exact sequence $0\to L_j/L_{j-1}\to L/L_{j-1}\to L/L_j
\to 0$ splits in the category of restricted Lie algebras.  Since we will show in
Theorem~\ref{pabsplit} that $[L:S]_{\rm p-split}$ is independent of the choice of
the $p$-chief series, we will not indicate the $p$-chief series in the notation.

Let $L$ be a finite-dimensional restricted Lie algebra over a field $\F$ of prime
characteristic, and let $u(L)$ denote the restricted universal enveloping algebra
of $L$ (see \cite[p.\ 192]{J} or \cite[p.\ 90]{SF}). Then every restricted $L$-module
is an $u(L)$-module and vice versa, and so there is a bijection between the irreducible
restricted $L$-modules and the irreducible $u(L)$-modules. In particular, as $u(L)$
is finite-dimensional (see \cite[Theorem 12, p.\ 191]{J} or \cite[Theorem 2.5.1(2)]{SF}),
every irreducible restricted $L$-module is finite-dimensional. Following Hochschild
\cite{H}  we define the {\em restricted cohomology\/} of $L$ with coefficients in a
restricted $L$-module $M$ by $H_*^n(L,M):=\ext_{u(L)}^n(\F,M)$ for every
non-negative integer $n$.

In \cite[Section 3]{H}, Hochschild  discusses restricted extensions of a strongly
abelian restricted Lie algebra $M$ by a restricted Lie algebra $L$ with a fixed
action of $L$ on $M$. In particular, he shows in \cite[Theorem 3.3]{H} that the
{\em set of equivalence classes of restricted extensions\/} of $M$ by $L$, which
he denotes by $\mathrm{ext}_*(M,L)$, is a vector space canonically isomorphic
to $H_*^2(L,M)$. In the following we indicate how to derive part of this result
in a different way by using the transgression $d_2^\mathcal{E}$ in the five-term
exact sequence
\begin{equation}\label{5termext}
0\to H_*^1(L,M)\to H_*^1(E,M)\to\Hom_L(M,M)\overset{d_2^\mathcal{E}}{\to}
H_*^2(L,M)\to H_*^2(E,M)
\end{equation}
associated to any restricted extension $\mathcal{E}:0\to M\to E\to L\to 0$ of $M$
by $L$. It is clear from the naturality of \eqref{5termext} that $d_2^\mathcal{E}$
only depends on the equivalence class $[\mathcal{E}]$ of the restricted extension
$E$. Hence one can define a map $\Delta:\mathrm{ext}_*(M,L)\to H_*^2(L,M)$ by
$\Delta([\mathcal{E}]):=d_2^\mathcal{E}(\id_M)$. As for ordinary Lie algebras one
has the following result (see \cite[Theorem VII.3.3]{HS}):

\begin{lem}\label{pext}
Let $L$ be a restricted Lie algebra, and let $M$ be a strongly abelian restricted
Lie algebra over a field of prime characteristic $p$. Furthermore, assume that $M$
is a restricted $L$-module. Then $\Delta:\mathrm{ext}_*(M,L)\to H_*^2(L,M)$
is a bijection. Moreover, the equivalence class of a restricted extension of $M$
by $L$ is mapped to zero if, and only if, its representatives are split.
\end{lem}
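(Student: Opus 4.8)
The plan is to establish that $\Delta$ is a bijection by exploiting the naturality of the five-term exact sequence \eqref{5termext} and reducing the statement to the corresponding well-known result for ordinary Lie algebras (or for group extensions), whose proof structure is recalled in \cite[Theorem VII.3.3]{HS}. First I would verify that $\Delta$ is well defined: since \eqref{5termext} is natural in $\mathcal{E}$, two equivalent restricted extensions induce the same transgression map $d_2^{\mathcal{E}}$, and hence $d_2^{\mathcal{E}}(\id_M)$ depends only on $[\mathcal{E}]$, as already remarked in the excerpt. The heart of the argument is to produce an explicit two-sided inverse. Given a restricted $2$-cocycle representing a class in $H_*^2(L,M)$, one constructs from it a restricted extension $\mathcal{E}$ of $M$ by $L$ in the standard way: the underlying space is $M\oplus L$, with bracket and $p$-mapping twisted by the factor sets encoded in the cocycle. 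I would then check that this assignment descends to equivalence classes and that the two composites with $\Delta$ are the respective identities.

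The key technical step is to identify $\Delta([\mathcal{E}])=d_2^{\mathcal{E}}(\id_M)$ with the cohomology class of the factor set attached to $\mathcal{E}$. Concretely, I would choose a restricted linear section $s\colon L\to E$ of $\mathcal{E}\colon 0\to M\to E\to L\to 0$ and measure its failure to be a morphism of restricted Lie algebras; this failure is recorded by a pair of maps (one from the bracket, one from the $p$-operation) that together form a restricted $2$-cocycle $f_{\mathcal{E}}\in Z_*^2(L,M)$. The content of the transgression computation is that $d_2^{\mathcal{E}}(\id_M)=[f_{\mathcal{E}}]$ in $H_*^2(L,M)$. This is where one must be careful about the restricted structure: unlike the ordinary Lie algebra case, the cocycle condition and the coboundary relation involve the $p$-semilinear contribution coming from the $p$-mapping, so the cohomology here is $\ext_{u(L)}^2(\F,M)$ rather than ordinary Lie algebra cohomology. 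I expect this comparison between the abstractly defined transgression and the concrete factor set to be the main obstacle, and I would handle it by unwinding the definition of $d_2^{\mathcal{E}}$ via the long exact sequence in $\ext_{u(L)}^{\bullet}(-,M)$ arising from the short exact sequence of $u(L)$-modules (or via the Lyndon--Hochschild--Serre type spectral sequence) associated to $\mathcal{E}$, and matching the connecting homomorphism with the explicit formula for $f_{\mathcal{E}}$.

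Once the identification $\Delta([\mathcal{E}])=[f_{\mathcal{E}}]$ is in hand, bijectivity follows formally. Injectivity: if $[f_{\mathcal{E}}]=[f_{\mathcal{E}'}]$, then the difference of the two factor sets is a restricted coboundary, which means one can adjust the sections so that the two extensions are equivalent; hence $\Delta$ is injective. Surjectivity: every class in $H_*^2(L,M)$ is represented by some restricted $2$-cocycle $f$, and the extension $\mathcal{E}_f$ built on $M\oplus L$ from $f$ satisfies $\Delta([\mathcal{E}_f])=[f]$, so $\Delta$ is onto. Finally, for the last assertion about split extensions, I would argue that $[\mathcal{E}]\mapsto 0$ under $\Delta$ precisely when $[f_{\mathcal{E}}]=0$, i.e.\ when $f_{\mathcal{E}}$ is a coboundary; after correcting the section $s$ by the corresponding element, one obtains a genuine homomorphism of restricted Lie algebras $L\to E$ splitting $\mathcal{E}$. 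Conversely, a restricted splitting yields a section making $f_{\mathcal{E}}=0$, so that $\Delta([\mathcal{E}])=0$. Thus the zero class corresponds exactly to the split extensions, completing the proof.
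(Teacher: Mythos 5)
Your overall strategy is viable in principle, but it concentrates the entire content of the lemma into one step that you only gesture at, and that step is precisely what is hard here. Everything else in your plan --- building an extension on $M\oplus L$ from a restricted $2$-cocycle, correcting sections by coboundaries, reading off splittings --- is Hochschild's original factor-set argument, i.e.\ essentially \cite[Theorem 3.3]{H}, which the paper already cites. What that citation does \emph{not} give you is the identification $d_2^{\mathcal{E}}(\id_M)=[f_{\mathcal{E}}]$: Hochschild's bijection between $\mathrm{ext}_*(M,L)$ and $H_*^2(L,M)$ is constructed without any reference to the transgression, and the lemma is a statement about the specific map $\Delta$ defined via the five-term sequence (indeed, the later Lemma on minimal $p$-ideals needs exactly the properties of $d_2^{\mathcal{E}}$, not of some abstract bijection). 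Proving this compatibility in the restricted setting is genuinely nontrivial: $H_*^2(L,M)=\ext_{u(L)}^2(\F,M)$ has no small standard cochain complex, Hochschild's ``restricted cochains'' involve the $p$-semilinear component $g(x)=s(x)^{[p]}-s(x^{[p]})$, and the differential $d_2$ in \eqref{5termext} is defined abstractly through the spectral sequence (or dimension shifting) attached to $u(M)\to u(E)\to u(L)$. ``Unwinding the connecting homomorphism and matching it with the explicit formula'' is the right idea, but it is the bulk of the work, there is no off-the-shelf reference for it, and as written your proposal leaves it entirely unproven. (A small terminological point: there is no ``restricted linear section'' --- you can only choose a linear section, and the failure of compatibility with the $p$-map is exactly what $g$ records.)

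For contrast, the paper's proof sidesteps cocycles completely, following \cite[Section VI.10]{HS}: choose a free presentation $\mathcal{F}:0\to R\to F\to L\to 0$, lift to a map of extensions, and use naturality of the five-term sequence to get $\Delta([\mathcal{E}])=d_2^{\mathcal{F}}(\psi)$, where $\psi:\overline{R}\to M$ is induced by the lift. Injectivity then follows by modifying one lift by a restricted derivation ($\varphi_2'=\varphi_2+\iota_2\circ D$) so that the two extensions become visibly equivalent, and the split case is handled by exhibiting the explicit restricted derivation $D(m,x)=m$ on the semidirect product, whose existence forces $d_2^{\mathcal{E}}(\id_M)=0$ by exactness; the converse direction is then free from injectivity. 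Note also that the paper deliberately proves only injectivity and the split criterion (surjectivity is never used), whereas your plan takes on the full bijection and hence also requires the construction of extensions from arbitrary cocycles. If you want to salvage your approach, the cleanest repair is either to carry out the cochain-level comparison of $d_2^{\mathcal{E}}$ with the factor set explicitly, or to abandon factor sets and adopt the free-presentation argument, which proves exactly the part of the statement the paper needs.
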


\begin{proof}
As we will not need the surjectivity in this paper, we only prove the injectivity of
$\Delta$ and that restricted extensions are mapped to zero if, and only if, they
are split. For more details we refer the reader to the proofs of the analogous results
for groups (see \cite[Section VI.10]{HS}).

Let $\mathcal{F}:0\to R\to F\to L\to 0$ of $L$ be any free presentation of the restricted
Lie algebra $L$, and let $\mathcal{E}:0\to M\to E\to L\to 0$ be any restricted extension
of $M$ by $L$. Then there exist restricted Lie algebra homomorphisms $\varphi:F\to E$
and $\rho:R\to M$ such that the diagram
\begin{equation}\label{commdiagr}
\xymatrix{
0 \ar[r] & R\ar[d]^{\rho} \ar[r] & F \ar[d]^{\varphi} \ar[r] & L\ar@{=}[d] \ar[r] & 0\\ 
0 \ar[r] & M \ar[r] & E \ar[r] & L \ar[r] & 0
}
\end{equation}
commutes.

Moreover, $\rho$ induces an $L$-module homomorphism $\psi:\overline{R}\to M$, where
$\overline{R}:=R/[R,R]+\langle R^{[p]}\rangle_\F$. Observe that $\psi$ is surjective if,
and only if, $\varphi$ is surjective.

The commutativity of the diagram \eqref{commdiagr} in conjunction with the naturality of
the five-term exact sequence yields that the diagram
\begin{equation*}
\xymatrix{
& H_\ast^1(E,M)\ar[d]^{\varphi^\ast} \ar[r] & \Hom_L(M,M) \ar[d]^{\psi^\ast}
\ar[r]^{d_2^\mathcal{E}} & H_\ast^2(L,M)\ar@{=}[d] \ar[r] & H_\ast^2(E,M)\ar[d]\\ 
& H_\ast^1(F,M) \ar[r]^{\tau} & \Hom_L(\overline{R}, M) \ar[r]^{d_2^\mathcal{F}}
& H_\ast^2(L,M)\ar[r] & 0
}
\end{equation*}
is commutative as well. In particular, we obtain that
\begin{equation}\label{delta}
\Delta([\mathcal{E}])=d_2^\mathcal{E}(\id_M)=d_2^\mathcal{F}(\psi^*(\id_M))=
d_2^\mathcal{F}(\psi)\,.
\end{equation}

We are ready to prove the injectivity of $\Delta$. Let $\mathcal{E}_1:0\to M
\stackrel{\iota_1}{\to}E_1\stackrel{\pi_1}{\to} L\to 0$ and $\mathcal{E}_2:0\to M
\stackrel{\iota_2}{\to}E_2\stackrel{\pi_2}{\to} L\to 0$ be two restricted extensions of $M$
by $L$, and suppose that $\Delta([\mathcal{E}_1])=\Delta([\mathcal{E}_2])$. Let $F:=F(E_1
\oplus E_2)$ denote the free restricted Lie algebra generated by the underlying vector space
of $E_1\oplus E_2$. Consider the free presentation $\mathcal{F}:0\to R\to F\to L\to 0$ of
the restricted Lie algebra $L$. Then the restricted Lie algebra homomorphisms $\varphi_j:F
\to E_j$ ($j=1,2$) are surjective, and the $L$-module homomorphisms $\psi_j: \overline{R}
\to M$ induced by $\rho_j$ ($j=1,2$) are surjective as well.
 
It follows from $\Delta([\mathcal{E}_1])=\Delta([\mathcal{E}_2])$ and (\ref{delta}) that $\psi_1-\psi_2\in\Ker(d_2^\mathcal{F})$. Thus there exists a restricted derivation $D:F\to
M$ such that $\psi_1-\psi_2=\tau (D)$. Put $$\varphi_2^\prime:=\varphi_2+\iota_2\circ
D\,.$$ It is easily seen that $\varphi_2^\prime:F\to E_2$ is a restricted Lie algebra homomorphism.
Consider the commutative diagram
\begin{equation*}
\xymatrix{
0 \ar[r] & R \ar[d]^{\rho_2^\prime} \ar[r] & F \ar[d]^{ \varphi_2^\prime} \ar[r] & L \ar@{=}[d] \ar[r] & 0\\ 
0 \ar[r] & M \ar[r] & E_2 \ar[r] & L \ar[r] & 0
}
\end{equation*} 
If $\psi_2^\prime:\overline{R}\to M$ denotes the map induced by $\rho_2^\prime$ on $\overline{R}$,
then it is clear that $\psi_2^\prime=\psi_2+\tau(D)=\psi_1$. In particular, $\psi_2^\prime$ is surjective,
and, in turn, so is $\varphi_2^\prime$. Moreover, as $\psi_1=\psi_2^\prime$, we have $\rho_1=
\rho_2^\prime$, which implies that $\Ker(\varphi_1)=\Ker(\varphi_2^\prime)$. Consequently, $E_1
\cong F/\Ker(\varphi_1)=F/\Ker(\varphi_2^\prime)\cong E_2$. Consider the map $\eta:E_2\to E_1$
defined by $\varphi_2^\prime(f)\mapsto\varphi_1(f)$ for every $f\in F$. Then $\eta$ is a restricted Lie
algebra homomorphism making the following diagram commutative:
\begin{equation*}
\xymatrix{
\mathcal{E}_1:0 \ar[r] & M \ar@{=}[d] \ar[r]^{\iota_2} & E_2 \ar[d]^{ \eta} \ar[r]^{\pi_2} & L\ar@{=}[d] \ar[r] & 0\\ 
\mathcal{E}_2:0 \ar[r] & M \ar[r]^{\iota_1} & E_1 \ar[r]^{\pi_1} & L \ar[r] & 0
}
\end{equation*}
We conclude that $[\mathcal{E}_1]=[\mathcal{E}_2]$, which finishes the proof of the injectivity
of $\Delta$.

Finally, let us prove that $\Delta$ maps split restricted extensions to zero. Because of the
injectivity of $\Delta$, split restricted extensions are the only ones that are mapped to
zero. Without loss of generality we can assume that $E$ is the semi-direct product $M
\rtimes L$ of $M$ and $L$. Recall that the $p$-mapping on $E$ is defined by $(m,x)^{[p]}
:= (x^{p-1}\cdot m,x^{[p]})$ for every $m\in M$ and every $x\in L$ (see \cite[p.\  572]{H}).
We have to show that $d_2^\mathcal{E}(\id_M)=0$, or by the exactness of the
corresponding five-term sequence, that there exists a restricted derivation $D$ from
$E$ to $M$ such that the restriction of $D$ to $M$ is the identity. It is straightforward
to check that $D(m,x):=m$ for every $m\in M$ and every $x\in L$ defines such a restricted
derivation.
\end{proof}

We are ready to prove a restricted analogue of \cite[Lemma 2]{Ba} which will be essential in
the proof of our main result (see Theorem \ref{pabsplit}).

\begin{lem}\label{trans}
Let $L$ be a finite-dimensional restricted Lie algebra over a field of prime characteristic $p$,
let $I$ be a minimal $p$-ideal of $L$ that is strongly abelian, and let $\mathcal{E}$ denote
the equivalence class of the restricted extension $0\to I\to L\to L/I\to 0$. Then the following
statements hold:
\begin{itemize}
\item[{\rm (a)}] If $\mathcal{E}$ splits, then the transgression $d_2^\mathcal{E}:\Hom_L(I,I)
                          \to H_*^2(L/I,I)$ is zero.
\item[{\rm (b)}] If $\mathcal{E}$ does not split, then the transgression $d_2^\mathcal{E}:
                          \Hom_L(I,I)\to H_*^2(L/I,I)$ is injective.
\end{itemize}
\end{lem}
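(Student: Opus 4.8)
The plan is to reduce both parts to the single value $d_2^{\mathcal{E}}(\id_I)$ computed by Lemma~\ref{pext}, exploiting the irreducibility of $I$ together with the naturality of the transgression in the coefficient module. First I would record the module-theoretic input. Since $I$ is a minimal $p$-ideal of $L$ that is strongly abelian, it is an irreducible restricted $L$-module; being abelian with vanishing $p$-mapping, $I$ acts trivially on itself, so it is even an irreducible $L/I$-module. By Schur's lemma $\D:=\End_L(I)=\End_{L/I}(I)$ is therefore a division algebra, and every nonzero $f\in\Hom_L(I,I)$ is invertible in $\D$.

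The key step is to establish the naturality identity
\[
d_2^{\mathcal{E}}(f)=f_*\bigl(d_2^{\mathcal{E}}(\id_I)\bigr)\qquad\text{for all }f\in\Hom_L(I,I),
\]
where $f_*\colon H_*^2(L/I,I)\to H_*^2(L/I,I)$ is the map induced by $f$ on restricted cohomology. This I would derive from the functoriality of the five-term exact sequence \eqref{5termext} in the coefficient module: a homomorphism $f\colon I\to I$ induces a commutative square relating the transgression $d_2^{\mathcal{E}}$ to itself, in which the left-hand vertical arrow is post-composition by $f$ on $\Hom_L(I,I)$ and the right-hand one is $f_*$ on $H_*^2(L/I,I)$. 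Evaluating this square at $\id_I$ and using $f\circ\id_I=f$ yields the displayed identity. Since $\ext^2_{u(L/I)}(\F,\argu)$ is a functor, one also has $(f^{-1})_*\circ f_*=(\,f^{-1}\circ f\,)_*=\id$ whenever $f$ is invertible, so in that case $f_*$ is an automorphism of $H_*^2(L/I,I)$.

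With these facts in hand both assertions are immediate. For (a), if $\mathcal{E}$ splits then $d_2^{\mathcal{E}}(\id_I)=\Delta([\mathcal{E}])=0$ by Lemma~\ref{pext}, whence $d_2^{\mathcal{E}}(f)=f_*(0)=0$ for every $f$, so the transgression vanishes identically. For (b), suppose $\mathcal{E}$ does not split, so that $d_2^{\mathcal{E}}(\id_I)\ne 0$ by Lemma~\ref{pext}, and let $f\in\Ker d_2^{\mathcal{E}}$ with $f\ne 0$. Then $f$ is invertible in the division algebra $\D$, hence $f_*$ is an automorphism, and applying $(f^{-1})_*$ to $0=d_2^{\mathcal{E}}(f)=f_*\bigl(d_2^{\mathcal{E}}(\id_I)\bigr)$ forces $d_2^{\mathcal{E}}(\id_I)=0$, a contradiction. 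Therefore $\Ker d_2^{\mathcal{E}}=0$ and $d_2^{\mathcal{E}}$ is injective.

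The main obstacle I anticipate is justifying the naturality identity rigorously inside the restricted cohomology framework, that is, verifying that the restricted analogue of the Lyndon--Hochschild--Serre five-term sequence and its transgression are genuinely functorial in the coefficient module and that the induced maps $f_*$ compose correctly. Once this functoriality and the division-algebra structure of $\End_L(I)$ are available, everything else is formal.
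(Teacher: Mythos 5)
Your proposal is correct and follows essentially the same route as the paper: both reduce everything to the value $d_2^{\mathcal{E}}(\id_I)$ supplied by Lemma~\ref{pext}, using Schur's lemma for the irreducible module $I$ and the compatibility of the transgression with the $\D$-action on coefficients (your ``naturality identity'' is precisely what the paper calls $\D$-linearity). The only cosmetic difference is in part (b), where the paper concludes by rank--nullity over $\D$ applied to the one-dimensional $\D$-space $\Hom_L(I,I)$, while you argue via invertibility of nonzero elements of $\D$ and the resulting automorphism $f_*$ --- two formulations of the same fact.
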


\begin{proof}
(a): It follows from Lemma \ref{pext} that $d_2^\mathcal{E}(\id_I)=0$. As $d_2^\mathcal{E}$
is compatible with the action of $\D:=\Hom_L(I,I)$, this implies that $d_2^\mathcal{E}=0$

(b): By virtue of the $\D$-linearity of $d_2^\mathcal{E}$, it is enough to show that $\Ker
(d_2^\mathcal{E})=0$. According to Lemma \ref{pext}, $d_2^\mathcal{E}(\id_I)\not= 0$.
Then the claim follows from  $$\dim_\D\Ker(d_2^\mathcal{E})+\dim_\D\im(d_2^\mathcal{E})
=\dim_\D\Hom_L(I,I)=1\,.$$
\end{proof}

\noindent {\bf Remark.} If one ignores  in the proofs of Lemma \ref{pext} and Lemma \ref{trans}
the compatibility of the homomorphisms with the $p$-mappings, then one obtains conceptual
proofs of \cite[Lemma 1 and 2]{Ba}, respectively.
\vspace{.3cm}

A restricted Lie algebra $L$ over $\F$ is called {\em $p$-perfect\/} if $L=[L,L]+\langle L^{[p]}
\rangle_\F$. By virtue of \cite[Proposition 2.7]{F1}, $L$ is $p$-perfect if, and only if, $H_*^1
(L,\F)=0$. Our main result is completely analogous to the main result of \cite{S2} (see also
\cite[Theorem 2.1]{FSW} for the analogue for ordinary Lie algebras):

\begin{thm}\label{pabsplit}
Let $L$ be a finite-dimensional restricted Lie algebra over a field of prime characteristic $p$,
and let $S$ be an irreducible $L$-module with centralizer algebra
$\D:=\End_L(S)$. Then
\begin{equation}\label{mult}
[L:S]_{\rm p-split}=\dim_\D H_*^1(L,S)-\dim_\D H_*^1(L/\ann_L(S),S)
\end{equation}
holds. In particular, $[L:S]_{\rm p-split}$ is independent of the choice of the $p$-chief
series of $L$.
\end{thm}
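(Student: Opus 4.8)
The strategy is to induct on the length $n$ of a $p$-chief series and reduce everything to the five-term exact sequence, exactly as in the group-theoretic template of \cite{S2} and its Lie-algebraic analogue \cite[Theorem 2.1]{FSW}. I would fix a $p$-chief series $0=L_0\subset L_1\subset\cdots\subset L_n=L$ and let $I:=L_1$ be the bottom minimal $p$-ideal, so that $\overline{L}:=L/I$ carries the induced series of length $n-1$. The inductive hypothesis supplies equation \eqref{mult} for $\overline{L}$ with any irreducible $\overline{L}$-module; the task is to account for the contribution of the single extra factor $I$ and to show the right-hand side does not depend on the chosen series. Throughout I would work over $\D:=\End_L(S)$ and measure everything in $\D$-dimensions, using that $d_2^{\mathcal{E}}$ is $\D$-linear (as already exploited in Lemma \ref{trans}).

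The key computation is to compare $\dim_\D H_*^1(L,S)$ with $\dim_\D H_*^1(\overline{L},S)$. I would split into cases according to whether $I\cong S$ as an $L$-module and whether the bottom extension $\mathcal{E}:0\to I\to L\to \overline{L}\to 0$ splits. The five-term exact sequence associated to $\mathcal{E}$,
\begin{equation*}
0\to H_*^1(\overline{L},S)\to H_*^1(L,S)\to \Hom_L(I,S)\overset{d_2^{\mathcal{E}}}{\to} H_*^2(\overline{L},S),
\end{equation*}
relates the two cohomology spaces: by exactness,
\begin{equation*}
\dim_\D H_*^1(L,S)=\dim_\D H_*^1(\overline{L},S)+\dim_\D\Ker\big(d_2^{\mathcal{E}}\colon \Hom_L(I,S)\to H_*^2(\overline{L},S)\big).
\end{equation*}
When $I\not\cong S$, Schur's lemma forces $\Hom_L(I,S)=0$, so the two first cohomologies agree and $I$ contributes nothing to either side of \eqref{mult}; this matches the fact that such a factor is never counted in $[L:S]_{\rm p-split}$. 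When $I\cong S$, the space $\Hom_L(I,S)$ is one-dimensional over $\D$, and here Lemma \ref{trans} is decisive: if $\mathcal{E}$ splits then $d_2^{\mathcal{E}}=0$, so $H_*^1(L,S)$ exceeds $H_*^1(\overline{L},S)$ by exactly $1$ over $\D$ and the split factor $I\cong S$ adds $1$ to $[L:S]_{\rm p-split}$; if $\mathcal{E}$ does not split then $d_2^{\mathcal{E}}$ is injective, the kernel is zero, the two first cohomologies again agree, and $I$ is not counted.

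The remaining subtlety—and the part I expect to be the main obstacle—is the term $\dim_\D H_*^1(L/\ann_L(S),S)$, which involves the annihilator $\ann_L(S)$ rather than $I$ directly; I would need to verify that subtracting this term converts the raw count of contributions to $H_*^1(L,S)$ into precisely the count of \emph{split} factors isomorphic to $S$, and that it transforms correctly under the reduction modulo $I$. Concretely, one must track how $\ann_L(S)$ and $\ann_{\overline{L}}(S)$ are related (they differ according to whether $I\subseteq\ann_L(S)$, i.e.\ whether $I$ acts trivially on $S$) and confirm that the difference $\dim_\D H_*^1(L,S)-\dim_\D H_*^1(L/\ann_L(S),S)$ behaves additively across the bottom step. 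The correction term is designed exactly to cancel the cohomology of the faithful quotient $L/\ann_L(S)$, whose first cohomology records \emph{non-split} or inner contributions that must not be counted; checking that this cancellation is compatible with the five-term sequence for both $L$ and its faithful quotient is where the bookkeeping is genuinely delicate. Once additivity of the right-hand side of \eqref{mult} across the single step is established in all four cases, the induction closes, and since $[L:S]_{\rm p-split}$ and the two cohomological dimensions are thereby shown to be equal, independence from the choice of $p$-chief series follows immediately from the intrinsic (series-free) definition of the right-hand side.
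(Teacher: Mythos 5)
Your skeleton coincides with the paper's proof in its main branch (induction, the five-term exact sequence, and Lemma \ref{trans} to decide whether the bottom factor contributes), but as written the plan has a genuine gap. Your entire case analysis silently assumes that $I=L_1$ acts trivially on $S$, i.e.\ $I\subseteq\ann_L(S)$. Since $I$ is a $p$-ideal and $S$ is irreducible, the invariant space $S^{I}$ is an $L$-submodule, so either $S^{I}=S$ or $S^{I}=0$. In the second case $S$ is not an $L/I$-module at all: the five-term sequence you display is not valid there (its outer terms must be $H_*^1(L/I,S^{I})$ and $H_*^2(L/I,S^{I})$, which vanish), and the inductive hypothesis cannot even be formulated for $\overline{L}$ with coefficients in $S$. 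This is precisely the case in which the correction term $\dim_\D H_*^1(L/\ann_L(S),S)$ does its work: the paper shows that then $[L:S]_{\rm p-split}=0$ and, separately, that the inflation $H_*^1(L/\ann_L(S),S)\to H_*^1(L,S)$ is an isomorphism, by comparing the two five-term sequences (for $L$ over $L_1$, and for $L/\ann_L(S)$ over its minimal $p$-ideal $(L_1+\ann_L(S))/\ann_L(S)$) through a commutative square whose bottom map $H_*^1(L_1+\ann_L(S)/\ann_L(S),S)^L\to H_*^1(L_1,S)^L$ is an isomorphism. You explicitly flag the annihilator bookkeeping as ``the main obstacle'' but supply no argument for it; none of your cases covers this situation. (By contrast, in the case $I\subseteq\ann_L(S)$ the bookkeeping you postpone is immediate: $\ann_{L/I}(S)=\ann_L(S)/I$, so the correction terms for $L$ and for $L/I$ coincide verbatim, which is how the paper's equation \eqref{mult} closes under induction.)

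A second, smaller defect: your appeal to Schur's lemma (``when $I\not\cong S$, $\Hom_L(I,S)=0$'') presupposes that $I$ is irreducible as an $L$-module, which a minimal $p$-ideal need not be --- the paper's own example, the minimal $p$-envelope of a non-restrictable simple Lie algebra, is a $p$-chief factor that is not irreducible. The correct dichotomy is that a minimal $p$-ideal is either strongly abelian (and then irreducible, so Schur's lemma applies) or $p$-perfect; in the $p$-perfect case the third term of the five-term sequence is $\Hom_L\bigl(I/([I,I]+\langle I^{[p]}\rangle_\F),S\bigr)=0$, not $\Hom_L(I,S)$, and one concludes $H_*^1(L,S)\cong H_*^1(L/I,S)$ that way, noting also that such a factor is never counted in $[L:S]_{\rm p-split}$ because it is not strongly abelian. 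With these two repairs --- the faithful case $S^{L_1}=0$ handled by the annihilator diagram, and the $p$-perfect/strongly abelian dichotomy replacing Schur's lemma --- your plan becomes the paper's proof.
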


\begin{proof}
We proceed by induction on the dimension of $L$. If $L$ is one-dimensional, then $L$ is
either a torus or strongly abelian. For a torus both sides of \eqref{mult} vanish and in the
strongly abelian case the only irreducible restricted $L$-module is trivial so that both sides
of \eqref{mult} are also equal. Thus we may assume that the dimension of $L$ is greater
than one, and that the claim holds for all restricted Lie algebras of dimension less than
$\dim_\F L$. Let $0=L_0\subset L_1\subset\cdots\subset L_n=L$ be a $p$-chief series
of $L$. For the remainder of the proof the multiplicity $[L:S]_{p-{\rm split}}$ always refers
to this fixed $p$-chief series.

If $\ann_L(S)=0$, then the right-hand side of \eqref{mult} is zero. But as strongly abelian
$p$-chief factors have non-zero annihilators, the left-hand side also vanishes and the
assertion holds. So we may assume that $\ann_L(S)\neq 0$.

We first assume that $L_1\subseteq\ann_L(S)$. Then the five-term exact sequence
for restricted cohomology in conjunction with \cite[Proposition 2.7]{F1} yields the
exactness of
\begin{equation}\label{5term}
\begin{aligned}
0\longrightarrow H_*^1(L/L_1,S)&\longrightarrow H_*^1(L,S)\\
&\longrightarrow\Hom_L(L_1/[L_1,L_1]+\langle L_1^{[p]}\rangle_\F,S)\longrightarrow
H_*^2(L/L_1,S)\,.
\end{aligned}
\end{equation}
Since $S$ is also an irreducible restricted $L/L_1$-module, one obtains by
induction that
\begin{equation}\label{indhyp}
[L/L_1:S]_{\rm p-split}=\dim_\D H_*^1(L/L_1,S)-\dim_\D H_*^1(L/\ann_L(S),S)\,.
\end{equation}
As $L_1$ is a minimal $p$-ideal of $L$, $L_1$ is either $p$-perfect or strongly
abelian. In the former case, the third term in \eqref{5term} vanishes, and thus
$H_*^1(L/L_1,S)\cong H_*^1(L,S)$. Since $L_1$ is not strongly abelian, one
has $[L:S]_{\rm p-split}=[L/L_1:S]_{\rm p-split}$. Hence \eqref{mult} holds in
this case.

If $L_1$ is strongly abelian, one has $\Hom_L(L_1/[L_1,L_1]+\langle L_1^{[p]}
\rangle_\F,S)=\Hom_L(L_1,S)$. If in addition $L_1$ and $S$ are not isomorphic
as $L$-modules, then $\Hom_L(L_1,S)=0$, and the assertion follows as before.

For $L_1\cong S$ one has to distinguish two cases depending on the strongly
abelian $p$-chief factor $L_1$ being split, or being not split. In case that $L_1$
is split, one has
\begin{equation}\label{add}
\begin{aligned}
&&[L:S]_{\rm p-split} & =[L/L_1:S]_{\rm p-split}+1\\
&&& = \dim_\D H_*^1(L/L_1,S)-\dim_\D H_*^1(L/\ann_L(S),S)+1\,,
\end{aligned}
\end{equation}
and Lemma \ref{trans}(a) shows that the transgression $\Hom_L(L_1,S)\to H_*^2
(L/L_1,S)$ is zero. Thus the exactness of \eqref{5term} implies that the restriction
$H_*^1(L,S)\to\Hom_L(L_1,S)$ is surjective, and therefore
\begin{equation}\label{add2}
\begin{aligned}
&& \dim_\D H_*^1(L,S) & =\dim_\D H_*^1(L/L_1,S)+\dim_\D\Hom_L(L_1,S)\\
&&& =\dim_\D H_*^1(L/L_1,S)+1\,.
\end{aligned}
\end{equation}
Hence \eqref{add} and \eqref{add2} yield the assertion. Suppose now that $L_1$
is not split. In this case Lemma \ref{trans}(b) implies that the transgression $\Hom_L
(L_1,S)\to H_*^2(L/L_1,S)$ is injective. According to \eqref{5term}, the inflation
$H_*^1(L/L_1,S)\to H_*^1(L,S)$ is bijective. Then one has $[L:S]_{\rm p-split}=
[L/L_1:S]_{\rm p-split}$, and the claim follows from \eqref{indhyp}.

Finally, assume that $L_1\not\subseteq\ann_L(S)$, i.e., $L_1\cap\ann_L(S)=0$ and
$S^{L_1}=0$. Suppose that $L_j/L_{j-1}$ is strongly abelian and $L_j/L_{j-1}\cong
S$ as $L$-modules for some integer $j$ with $1\leq j\leq n$. Then $L_j$ -- and thus
$L_1$ -- would be contained in $\ann_L(S)$, a contradiction. Hence $[L:S]_{\rm p-split}
=0$. As $S^{L_1}=0$, one concludes from the beginning of the five-term exact sequence
\begin{equation*}
0\longrightarrow H_*^1(L/L_1,S^{L_1})\longrightarrow H_*^1(L,S)\longrightarrow
H_*^1(L_1,S)^L\longrightarrow H_*^2(L/L_1,S^{L_1})
\end{equation*}
that the vertical mappings in the commutative diagram
\begin{equation*}
\label{eq:comdia}
\xymatrix{
H_*^1(L/\ann_L(S),S)\ar[r]^-\alpha\ar[d]&H_*^1(L,S)\ar[d]\\
H_*^1(L_1+\ann_L(S)/\ann_L(S),S)^L\ar[r]^-\beta&H_*^1(L_1,S)^L
}
\end{equation*}
are isomorphisms. Because $\beta$ is an isomorphism, $\alpha$ is an isomorphism as
well. This shows that in this case the right-hand side of \eqref{mult} is also zero.

Since the right-hand side of \eqref{mult} does not depend on the choice of the $p$-chief
series, the left-hand side does not either. This completes the proof of the theorem.
\end{proof}

In the extreme case $\ann_L(S)=L$, Theorem \ref{pabsplit} in conjunction with \cite[Proposition
2.7]{F1} has the following consequence:

\begin{cor}\label{triv}
Let $L$ be a finite-dimensional restricted Lie algebra over a field $\F$ of prime characteristic $p$.
Then the trivial irreducible $L$-module occurs with multiplicity $\dim_\F L/[L,L]+\langle L^{[p]}
\rangle_\F$ as a split strongly abelian $p$-chief factor of $L$.
\end{cor}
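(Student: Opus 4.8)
The plan is to apply Theorem~\ref{pabsplit} directly to the trivial irreducible $L$-module $S = \F$, for which the annihilator satisfies $\ann_L(\F) = L$. With this choice the second term on the right-hand side of \eqref{mult} becomes $\dim_\D H_*^1(L/\ann_L(\F),\F) = \dim_\D H_*^1(0,\F)$, which vanishes since the quotient $L/L = 0$ is the zero Lie algebra and has no higher cohomology. Moreover, the centralizer algebra of the trivial module is just the ground field, $\D = \End_L(\F) = \F$, so the $\D$-dimension coincides with the $\F$-dimension throughout. Hence \eqref{mult} collapses to
\begin{equation*}
[L:\F]_{\rm p-split} = \dim_\F H_*^1(L,\F)\,.
\end{equation*}

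The remaining step is to identify $\dim_\F H_*^1(L,\F)$ with $\dim_\F L/[L,L]+\langle L^{[p]}\rangle_\F$. This is exactly the content of \cite[Proposition 2.7]{F1}, which was already invoked earlier in the excerpt to characterize $p$-perfect algebras: there one uses that $L$ is $p$-perfect (that is, $L = [L,L]+\langle L^{[p]}\rangle_\F$) precisely when $H_*^1(L,\F) = 0$. The sharper statement underlying that characterization is the natural isomorphism $H_*^1(L,\F) \cong \Hom_\F\bigl(L/[L,L]+\langle L^{[p]}\rangle_\F,\F\bigr)$, since a restricted derivation into the trivial module is simply a linear map annihilating both $[L,L]$ and all $p$-th powers $L^{[p]}$. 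Taking $\F$-dimensions and using that dualization preserves dimension in the finite-dimensional setting then gives $\dim_\F H_*^1(L,\F) = \dim_\F L/[L,L]+\langle L^{[p]}\rangle_\F$, which completes the argument.

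I do not anticipate a genuine obstacle here, as the corollary is a specialization of the already-proved Theorem~\ref{pabsplit} combined with a standard cohomological fact. The only points requiring minor care are the two bookkeeping reductions: verifying that $L/\ann_L(\F)$ is genuinely the zero algebra so that its first cohomology vanishes, and confirming that $\D = \F$ so that no stray multiplicity factor of $\dim_\D S$ intervenes. Both are immediate, so the proof should amount to little more than substituting $S = \F$ into \eqref{mult} and quoting \cite[Proposition 2.7]{F1}.
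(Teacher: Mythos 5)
Your proposal is correct and follows exactly the paper's route: the paper derives the corollary by specializing Theorem~\ref{pabsplit} to the extreme case $\ann_L(S)=L$ (so the second term of \eqref{mult} vanishes and $\D=\F$) and then invoking \cite[Proposition 2.7]{F1}, which is precisely the identification $H_*^1(L,\F)\cong\Hom_\F\bigl(L/[L,L]+\langle L^{[p]}\rangle_\F,\F\bigr)$ that you spell out via restricted derivations into the trivial module. Your write-up merely makes explicit the bookkeeping the paper leaves implicit, so there is nothing to add.
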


Moreover, the next result follows from Hochschild's six-term exact sequence relating ordinary
and restricted cohomology (see \cite[p.\ 575]{H}) in conjunction with Corollary \ref{triv} and
\cite[Corollary 2.2]{FSW}. (Here $[L:S]_{\rm split}$ denotes the multiplicity of $S$ as a split
abelian chief factor of the ordinary Lie algebra $L$.)

\begin{cor}\label{split}
Let $L$ be a finite-dimensional restricted Lie algebra over a field $\F$ of prime characteristic
$p$. If $S$ is an irreducible restricted $L$-module, then
\begin{eqnarray*}
\lefteqn{[L:S]_{\rm p-split}}\\
&& =\left\{\begin{array}{ll}
[L:S]_{\rm split}&\mbox{\rm if }S\not\cong\F\\
{}[L:S]_{\rm split}-\dim_\F(\langle L^{[p]}\rangle_\F/[L,L]\cap\langle L^{[p]}\rangle_\F)
&\mbox{\rm if }S\cong\F
\end{array}\right. .
\end{eqnarray*}
In particular, $[L:S]_{\rm p-split}\le[L:S]_{\rm split}$.
\end{cor}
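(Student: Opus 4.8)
The plan is to treat the trivial and the non-trivial irreducible modules separately; the only real content is the comparison between restricted and ordinary first cohomology supplied by Hochschild's six-term exact sequence.

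First I would dispose of the case $S\cong\F$ directly. Here Corollary~\ref{triv} gives $[L:\F]_{\rm p-split}=\dim_\F L/([L,L]+\langle L^{[p]}\rangle_\F)$, while its ordinary counterpart \cite[Corollary 2.2]{FSW} gives $[L:\F]_{\rm split}=\dim_\F L/[L,L]$. Subtracting, applying the homomorphism theorem to the surjection $L/[L,L]\to L/([L,L]+\langle L^{[p]}\rangle_\F)$ (whose kernel is $([L,L]+\langle L^{[p]}\rangle_\F)/[L,L]$) and then the second isomorphism theorem yields
\[
[L:\F]_{\rm split}-[L:\F]_{\rm p-split}=\dim_\F\frac{[L,L]+\langle L^{[p]}\rangle_\F}{[L,L]}=\dim_\F\frac{\langle L^{[p]}\rangle_\F}{[L,L]\cap\langle L^{[p]}\rangle_\F},
\]
which is exactly the claimed formula when $S\cong\F$.

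For $S\not\cong\F$ I would show that the two multiplicities coincide. Since $S$ is irreducible and non-trivial, the invariant submodule $S^L$ is zero, and the same holds for $S$ viewed as a (faithful, non-trivial, still irreducible) module over $L/\ann_L(S)$. Inner derivations into $S$ are automatically restricted, because $x^{[p]}$ acts on $S$ as the $p$-th power $x^p$ in $u(L)$; hence the natural map $H_*^1(L,S)\to H^1(L,S)$ is injective. Hochschild's six-term exact sequence (see \cite[p.\ 575]{H}) identifies the cokernel of this map with a space of ($p$-semilinear) homomorphisms whose target is the invariant module $S^L$; as $S^L=0$, this cokernel vanishes and $H_*^1(L,S)\cong H^1(L,S)$ as $\D$-modules. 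The same reasoning applied to $L/\ann_L(S)$ gives $H_*^1(L/\ann_L(S),S)\cong H^1(L/\ann_L(S),S)$. Feeding these isomorphisms into Theorem~\ref{pabsplit} and into its ordinary analogue \cite[Theorem 2.1]{FSW}, both $[L:S]_{\rm p-split}$ and $[L:S]_{\rm split}$ are computed as the same difference of $\D$-dimensions, so they are equal.

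Finally, the inequality $[L:S]_{\rm p-split}\le[L:S]_{\rm split}$ is immediate: in the non-trivial case the two sides agree, and in the trivial case they differ by the non-negative quantity $\dim_\F\langle L^{[p]}\rangle_\F/([L,L]\cap\langle L^{[p]}\rangle_\F)$. The step I expect to be the main obstacle is the precise invocation of Hochschild's sequence in the non-trivial case, namely verifying that the comparison term between $H_*^1(L,S)$ and $H^1(L,S)$ is built from homomorphisms into $S^L$, so that it collapses once $S^L=0$; everything else is either a dimension count or a direct appeal to the two main theorems.
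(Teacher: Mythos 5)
Your proof is correct and is essentially the paper's own argument, which is only sketched there in one sentence: the corollary is derived from Hochschild's six-term exact sequence (whose semilinear-homomorphism term vanishes when $S^L=0$, identifying $H_*^1$ with $H^1$ for both $L$ and $L/\ann_L(S)$ and hence matching the formulas of Theorem~\ref{pabsplit} and \cite[Theorem 2.1]{FSW}) together with Corollary~\ref{triv} and \cite[Corollary 2.2]{FSW} for the trivial-module dimension count. Your fleshed-out two-case treatment supplies exactly the details the paper leaves implicit.
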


\noindent The equality of $[L:S]_{\rm p-split}$ and $[L:S]_{\rm split}$ for non-trivial irreducible
restricted $L$-modules $S$ explains why the results in Section 5 of \cite{FSW} could be obtained
although their ingredients belong to different categories. 

Recall that the {\em principal block\/} of a restricted Lie algebra is the block that
contains the trivial irreducible module. For the convenience of the reader we include
a proof of the following result which is completely analogous to the corresponding
proof for modular group algebras (see \cite[Proposition 1]{S1}).

\begin{pro}\label{pchiefpriblo}
Every strongly abelian $p$-chief factor of a finite-dimensional restricted Lie algebra $L$ belongs
to the principal block of $L$.
\end{pro}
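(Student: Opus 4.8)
The plan is to translate block membership into a cohomological nonvanishing statement and then reuse the five-term/transgression machinery developed above. Recall that $u(L)$ decomposes as a direct sum of its blocks (indecomposable two-sided ideals), that every simple $u(L)$-module lies in exactly one block, and that $\mathrm{Ext}^n_{u(L)}(M,N)=0$ in \emph{every} degree $n$ whenever $M$ and $N$ lie in different blocks. Consequently, to show that a strongly abelian $p$-chief factor $S=L_j/L_{j-1}$ --- which is a simple $u(L)$-module --- lies in the principal block, it suffices to exhibit a single $n\ge 0$ with $H_*^n(L,S)=\mathrm{Ext}^n_{u(L)}(\F,S)\neq 0$, since any such nonvanishing links the simple module $S$ to the trivial module $\F$. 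First I would reduce to the case of a minimal $p$-ideal: since $L_{j-1}$ acts trivially on $S$, the module $S$ is already a $u(L/L_{j-1})$-module, and inflation embeds $\mathrm{Ext}^1_{u(L/L_{j-1})}$ into $\mathrm{Ext}^1_{u(L)}$ and hence preserves the block linkage graph, so it is enough to treat $S=I$ as a minimal strongly abelian $p$-ideal of $L$. If $I$ is central it coincides with the trivial module and there is nothing to prove, so I assume $I$ is nontrivial as an $L$-module.

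For the main step I would feed the defining extension $\mathcal E\colon 0\to I\to L\to L/I\to 0$ into the five-term exact sequence \eqref{5termext}, which in this situation reads
\[
0\to H_*^1(L/I,I)\to H_*^1(L,I)\xrightarrow{\ \mathrm{res}\ }\Hom_L(I,I)\xrightarrow{\ d_2^{\mathcal E}\ }H_*^2(L/I,I).
\]
If the chief factor $I$ splits, then Lemma \ref{trans}(a) gives $d_2^{\mathcal E}=0$, so $\mathrm{res}$ is surjective onto $\Hom_L(I,I)=\D\neq 0$; hence $H_*^1(L,I)\neq 0$ and $I$ lies in the principal block. This disposes of exactly the factors counted by $[L:S]_{\mathrm{p-split}}$ and is the direct analogue of the split case treated in Lemma \ref{trans}.

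The hard part will be the nonsplit case. Here Lemma \ref{trans}(b) makes $d_2^{\mathcal E}$ injective, forcing $\mathrm{res}=0$, so that the inflation $H_*^1(L/I,I)\to H_*^1(L,I)$ is an isomorphism and first degree cohomology yields no direct link; worse, the nonzero transgression $d_2^{\mathcal E}(\id_I)$ lies in the kernel of $H_*^2(L/I,I)\to H_*^2(L,I)$ and so does not by itself survive to $H_*^2(L,I)$. To obtain a surviving class I would pass to the Lyndon--Hochschild--Serre spectral sequence $E_2^{p,q}=H_*^p(L/I,H_*^q(I,I))\Rightarrow H_*^{p+q}(L,I)$ and exploit that, $I$ being strongly abelian, $u(I)$ is a truncated polynomial algebra whose restricted cohomology is nonzero in every degree and carries non-nilpotent generators in degree two; the goal is to force some $E_\infty^{p,q}\neq 0$ and thereby $H_*^n(L,I)\neq 0$ for some $n$. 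The genuine obstacle is to guarantee that such a class is not annihilated by a higher differential --- equivalently, to exclude that the nonsplit $I$ and $\F$ sit in different blocks --- and I expect this to require a non-nilpotence (support-variety) argument for the degree-two classes rather than a mere degree count, since both $H_*^\ast(L/I,-)$ and the cohomology of $u(I)$ are unbounded. As a partial simplification one can first arrange that $I$ is the unique minimal $p$-ideal of $L$: if some other minimal $p$-ideal $K$ with $K\cap I=0$ exists, then $(I+K)/K\cong I$ is a minimal strongly abelian $p$-ideal of the lower-dimensional algebra $L/K$, and inflation from $u(L/K)$ finishes the case by induction on $\dim_\F L$; this isolates the essential difficulty in the unique-minimal-ideal, nonsplit situation.
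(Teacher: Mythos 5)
Your split case is fine, and your reduction step (inflation preserves block linkage, so one may pass to a minimal strongly abelian $p$-ideal $I$ of $L$) is legitimate --- it is precisely \cite[Lemma 4]{F2}, which the paper also invokes. But the nonsplit case, which you yourself flag as the hard part, is a genuine gap: the Lyndon--Hochschild--Serre/support-variety program you sketch is not carried out, and as you state it, it is circular --- you say the obstacle is ``to exclude that the nonsplit $I$ and $\F$ sit in different blocks,'' which is exactly the statement to be proved. Moreover the whole program rests on a false premise, namely that you must produce a nonzero class in $H_*^n(L,I)$ for some $n$. You do not.

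The point you missed is that block membership only needs to be established over the \emph{factor algebra} and then transported by the very inflation principle you used in your reduction. In the nonsplit case, Lemma \ref{trans}(b) gives that $d_2^{\mathcal E}\colon\Hom_L(I,I)\to H_*^2(L/I,I)$ is injective, and $\Hom_L(I,I)\neq 0$, so $H_*^2(L/I,I)=\ext_{u(L/I)}^2(\F,I)\neq 0$ (note $I$ is a trivial $I$-module, hence a $u(L/I)$-module). Nonvanishing of $\ext^2$ over $u(L/I)$ already places $I$ in the principal block of $u(L/I)$; block linkage is witnessed by chains of nonzero $\ext^1$'s between \emph{simple} modules, and each such $\ext^1$ inflates injectively from $u(L/I)$ to $u(L)$, so the linkage of $I$ to $\F$ survives even though the particular class $d_2^{\mathcal E}(\id_I)$ dies under inflation to $H_*^2(L,I)$. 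Your worry conflated the survival of one cohomology class with the survival of linkage. This is how the paper argues, and in fact it does so uniformly, without any split/nonsplit distinction: for $S=I/J$ it writes the five-term sequence of $0\to I/J\to L/J\to L/I\to 0$, observes that exactness of
\begin{equation*}
0\longrightarrow H_*^1(L/I,S)\longrightarrow H_*^1(L/J,S)\longrightarrow\Hom_L(S,S)
\longrightarrow H_*^2(L/I,S)
\end{equation*}
together with $\Hom_L(S,S)\neq 0$ forces the second or the fourth term to be nonzero, and then cites \cite[Lemma 1(a)]{F2} (nonvanishing low-degree cohomology gives principal block membership over a factor algebra) and \cite[Lemma 4]{F2} (principal blocks of factor algebras inflate into the principal block of $L$). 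With that observation your proof closes in one line and your spectral sequence machinery becomes unnecessary.
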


\begin{proof}
Let $S=I/J$ be a strongly abelian $p$-chief factor of $L$. In particular, $S$ is a
trivial $I$-module. Then the five-term exact sequence for restricted cohomology in
conjunction with \cite[Proposition 2.7]{F1} yields the exactness of $$0\longrightarrow
H_*^1(L/I,S)\longrightarrow H_*^1(L/J,S)\longrightarrow\Hom_L(S,S)\longrightarrow
H_*^2(L/I,S)\,.$$ Since the third term is non-zero, the second or fourth term must
also be non-zero. According to \cite[Lemma 1(a)]{F2}, in either case $S$ belongs
to the principal block of a restricted Lie factor algebra of $L$. Then it follows from
\cite[Lemma 4]{F2} that $S$ also belongs to the principal block of $L$.
\end{proof}

The analogue of Theorem \ref{gaschuetz} for restricted Lie algebras is another consequence
of Theorem \ref{pabsplit} in conjunction with the equivalence (i)$\Longleftrightarrow$(iv) in
\cite[Theorem 5.5]{FSW}.

\begin{thm}\label{charsolv}
Let $L$ be a finite-dimensional restricted Lie algebra over a field $\F$ of prime characteristic
$p$. Then the following statements are equivalent:
\begin{enumerate}
\item [(i)]   $L$ is solvable.
\item [(ii)]  $\dim_\F H_*^1(L,S)=\dim_\F\End_L(S)\cdot[L:S]_{\rm p-split}$ holds for every
                  irreducible $L$-module $S$.
\item [(iii)] $\dim_\F H_*^1(L,S)=\dim_\F\End_L(S)\cdot[L:S]_{\rm p-split}$ holds for every
                  irreducible $L$-module $S$ belonging to the principal block of $L$.
\end{enumerate}
\end{thm}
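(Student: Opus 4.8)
The plan is to prove the equivalence of the three statements by establishing the implications $(i)\Longrightarrow(ii)\Longrightarrow(iii)\Longrightarrow(i)$, using Theorem~\ref{pabsplit} as the bridge between the cohomological and the chief-factor sides of the identity. The implication $(ii)\Longrightarrow(iii)$ is trivial, since $(iii)$ is just the restriction of $(ii)$ to the irreducible modules lying in the principal block. So the substantive work is to prove $(i)\Longrightarrow(ii)$ and $(iii)\Longrightarrow(i)$, and for both of these I would exploit the cited equivalence $(i)\Longleftrightarrow(iv)$ in \cite[Theorem 5.5]{FSW}, which (by analogy with Stammbach's cohomological characterization of $p$-solvable groups) should characterize solvability of $L$ by the vanishing of the correction term $\dim_\D H_*^1(L/\ann_L(S),S)$ appearing on the right-hand side of \eqref{mult}.

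First I would rewrite the desired identity in each statement. Observe that the identity in $(ii)$ can be recast using $\dim_\F$ versus $\dim_\D$: since $\D=\End_L(S)$ acts on $H_*^1(L,S)$, one has $\dim_\F H_*^1(L,S)=\dim_\F\D\cdot\dim_\D H_*^1(L,S)$, and therefore the identity $\dim_\F H_*^1(L,S)=\dim_\F\D\cdot[L:S]_{\rm p-split}$ is equivalent to $\dim_\D H_*^1(L,S)=[L:S]_{\rm p-split}$. Now Theorem~\ref{pabsplit} gives $[L:S]_{\rm p-split}=\dim_\D H_*^1(L,S)-\dim_\D H_*^1(L/\ann_L(S),S)$, so the condition in $(ii)$ holds for a given $S$ if, and only if, $\dim_\D H_*^1(L/\ann_L(S),S)=0$, i.e.\ $H_*^1(L/\ann_L(S),S)=0$. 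Thus statement $(ii)$ is equivalent to the assertion that $H_*^1(L/\ann_L(S),S)=0$ for every irreducible $L$-module $S$, and statement $(iii)$ is the same assertion restricted to $S$ in the principal block.

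With this reformulation in hand, the two nontrivial implications follow from \cite[Theorem 5.5]{FSW}. For $(i)\Longrightarrow(ii)$: if $L$ is solvable, then the cited equivalence should guarantee the vanishing of $H_*^1(L/\ann_L(S),S)$ for every irreducible $S$, giving $(ii)$ at once. For $(iii)\Longrightarrow(i)$: assuming the vanishing only for irreducible modules in the principal block, I would invoke Proposition~\ref{pchiefpriblo}, which ensures that every strongly abelian $p$-chief factor belongs to the principal block; hence the principal-block modules already detect all split strongly abelian $p$-chief factors, and together with \cite[Theorem 5.5]{FSW} this should force $L$ to be solvable. The main obstacle I anticipate is matching the precise formulation of condition $(iv)$ in \cite[Theorem 5.5]{FSW} to the vanishing statement $H_*^1(L/\ann_L(S),S)=0$, and in particular confirming that the principal-block restriction in $(iii)$ suffices to recover the full solvability criterion; this is exactly the point at which Proposition~\ref{pchiefpriblo} must be brought to bear.
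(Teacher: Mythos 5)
Your reduction of (ii) to a vanishing statement is correct and is exactly how the paper argues: since $H_*^1(L,S)$ and $H_*^1(L/\ann_L(S),S)$ are vector spaces over $\D:=\End_L(S)$, Theorem~\ref{pabsplit} shows that the identity in (ii) holds for a given irreducible $S$ if, and only if, $H_*^1(L/\ann_L(S),S)=0$, and the equivalence (i)$\Longleftrightarrow$(ii) is then precisely the cited equivalence (i)$\Longleftrightarrow$(iv) of \cite[Theorem 5.5]{FSW}; also, (ii)$\Longrightarrow$(iii) is indeed trivial. The genuine gap is in your implication (iii)$\Longrightarrow$(i). You assert that Proposition~\ref{pchiefpriblo} is ``exactly'' what makes the principal-block hypothesis suffice, but that proposition only controls the multiplicity side of the identity: it shows $[L:S]_{\rm p-split}=0$ for every irreducible $S$ outside the principal block. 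What you actually need, in order to feed your hypothesis into \cite[Theorem 5.5]{FSW} (whose condition (iv) quantifies over \emph{all} irreducible restricted modules), is the \emph{cohomological} vanishing $H_*^1(L/\ann_L(S),S)=0$ for every irreducible $S$ outside the principal block, and no statement about where $p$-chief factors live can deliver that: for such $S$, Proposition~\ref{pchiefpriblo} together with Theorem~\ref{pabsplit} only gives $\dim_\D H_*^1(L,S)=\dim_\D H_*^1(L/\ann_L(S),S)$, which is compatible with both being non-zero.

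The missing ingredient is the block-theoretic fact the paper cites as \cite[Lemma 1(a)]{F2}: there are no non-trivial extensions between irreducible $u(L)$-modules lying in different blocks, so $H_*^1(L,S)=\ext_{u(L)}^1(\F,S)=0$ whenever $S$ does not belong to the principal block (the block of $\F$). Combining this with the injectivity of inflation $H_*^1(L/\ann_L(S),S)\hookrightarrow H_*^1(L,S)$ (the start of the five-term exact sequence, using $S^{\ann_L(S)}=S$) gives the required vanishing for all non-principal-block $S$, after which \cite[Theorem 5.5]{FSW} yields solvability. This is also how the paper closes the circle, in the equivalent form (iii)$\Longrightarrow$(ii): for $S$ outside the principal block, \cite[Lemma 1(a)]{F2} kills the left-hand side $\dim_\F H_*^1(L,S)$ and Proposition~\ref{pchiefpriblo} kills the right-hand side $\dim_\F\End_L(S)\cdot[L:S]_{\rm p-split}$, so the identity in (ii) holds vacuously there. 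In that arrangement both facts are needed, whereas in your route Proposition~\ref{pchiefpriblo} is not the relevant tool at all; either way, your plan as written omits the Ext-vanishing across blocks, and without it the step (iii)$\Longrightarrow$(i) does not go through.
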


\begin{proof}
The equivalence of (i) and (ii) is a consequence of Theorem \ref{pabsplit} and the equivalence
(i)$\Longleftrightarrow$(iv) in \cite[Theorem 5.5]{FSW}, and the equivalence of (ii) and (iii)
follows from \cite[Lemma 1(a)]{F2} in conjunction with Proposition \ref{pchiefpriblo}.
\end{proof}

\noindent {\bf Remark.} It is an immediate consequence of Theorem \ref{pabsplit}
that $\dim_\F H_*^1(L,S)=\dim_\F\End_L(S)\cdot[L:S]_{\rm p-split}$ holds for the
trivial irreducible $L$-module $S$. Hence one can also obtain Theorem \ref{charsolv}
immediately from Corollary \ref{split} and the equivalence of (i), (vi), and (vii) in
\cite[Theorem 5.5]{FSW}. We included the proof given above since it is the precise
analogue of the proof of \cite[Corollary 1]{S2}.


\section{Split strongly abelian $p$-chief factors and the $0$-PIM}


Let $A$ be a finite-dimensional (unital) associative algebra with Jacobson radical
$\jac(A)$, and let $M$ be a (unital left) $A$-module. Then the descending filtration
$$M\supset\jac(A)M\supset\jac(A)^2M\supset\jac(A)^3M\supset\cdots\supset\jac
(A)^{\ell}M\supset\jac(A)^{\ell+1}M=0$$ is called the {\em Loewy series\/} of $M$
and the factor module $\jac(A)^{n-1}M/\jac(A)^nM$  is called the $n^{\rm th}$ {\em
Loewy layer\/} of $M$ (see \cite[Definition 1.2.1]{B} or \cite[Definition VII.10.10a)]{HB}).

Recall that a projective module $P_A(M)$ is a {\it projective cover\/} of $M$, if there
exists an $A$-module epimorphism $\pi_M$ from $P_A(M)$ onto $M$ such that
the kernel of $\pi_M$ is contained in the radical $\jac(A)P_A(M)$ of $P_A(M)$. If
projective covers exist, then they are unique up to isomorphism. It is well known that
projective covers of finite-dimensional modules over finite-dimensional associative
algebras exist and are again finite-dimensional. Moreover, every projective
indecomposable $A$-module is isomorphic to the projective cover of some irreducible
$A$-module. In this way one obtains a bijection between the isomorphism classes
of the projective indecomposable $A$-modules and the isomorphism classes of the
irreducible $A$-modules.

In the sequel we use the notation $P_L(\F):=P_{u(L)}(\F)$ for the projective cover
of the trivial irreducible module of a finite-dimensional restricted Lie algebra $L$ over
a field $\F$ of prime characteristic. Using \cite[Proposition 2.4.3]{B} and Theorem
\ref{pabsplit} we obtain a lower bound for the multiplicity $[\jac(u(L))P_L(\F)/\jac
(u(L))^2P_L(\F):S]$ of an irreducible restricted $L$-module $S$ in the second Loewy
layer of $P_L(\F)$ (see \cite[Theorem 3.7]{W} for the analogue in the modular
representation theory of finite groups):

\begin{thm}\label{loewybd}
Let $L$ be a finite-dimensional restricted Lie algebra over a field $\F$ of prime
characteristic $p$. Then $$[\jac(u(L))P_L(\F)/\jac(u(L))^2P_L(\F):S]\ge[L:S]_{\rm
p-split}$$ for every irreducible restricted $L$-module $S$.
\end{thm}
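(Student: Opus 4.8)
The plan is to relate the second Loewy layer of $P_L(\F)$ to first-degree restricted cohomology and then invoke Theorem~\ref{pabsplit}. The key identity I would use is \cite[Proposition 2.4.3]{B}, which expresses the multiplicity of an irreducible module $S$ in the second Loewy layer of the projective cover of another irreducible module $T$ in terms of $\ext^1$. Specializing to $T=\F$, the trivial module, one should get
\begin{equation*}
[\jac(u(L))P_L(\F)/\jac(u(L))^2P_L(\F):S]=\dim_{\D}\ext_{u(L)}^1(\F,S)=\dim_{\D}H_*^1(L,S),
\end{equation*}
where $\D=\End_L(S)$; here I am using that $\ext^1_{u(L)}(\F,S)$ computes the multiplicity of $S$ in the second radical (equivalently second Loewy) layer of $P_L(\F)$ via the standard interpretation of $\ext^1$ as extensions, counted with the appropriate division-ring dimension, and that $\ext_{u(L)}^1(\F,S)=H_*^1(L,S)$ by the definition of restricted cohomology given in Section~2. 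So the first step is to pin down the precise form of \cite[Proposition 2.4.3]{B} and verify that it yields this multiplicity formula in our setting.

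Once that cohomological reformulation is in hand, the inequality follows immediately by comparison with Theorem~\ref{pabsplit}. Indeed, that theorem gives
\begin{equation*}
[L:S]_{\rm p-split}=\dim_\D H_*^1(L,S)-\dim_\D H_*^1(L/\ann_L(S),S)\le\dim_\D H_*^1(L,S),
\end{equation*}
since the subtracted term $\dim_\D H_*^1(L/\ann_L(S),S)$ is a nonnegative integer. Combining the two displays yields
\begin{equation*}
[\jac(u(L))P_L(\F)/\jac(u(L))^2P_L(\F):S]=\dim_\D H_*^1(L,S)\ge[L:S]_{\rm p-split},
\end{equation*}
which is exactly the asserted bound. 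So the entire argument reduces to the two equalities: the Loewy-layer/$\ext^1$ identity from \cite[Proposition 2.4.3]{B} and the cohomology formula of Theorem~\ref{pabsplit}.

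The main obstacle I anticipate is bookkeeping with the division algebra $\D$ and making sure the multiplicity $[\,\cdot:S\,]$ (the composition-factor count) matches $\dim_\D H_*^1(L,S)$ rather than $\dim_\F H_*^1(L,S)$. The multiplicity of $S$ in a module counts copies of $S$, whereas $\dim_\F\ext^1(\F,S)$ overcounts by the factor $\dim_\F\D=\dim_\F\End_L(S)$; dividing by $\dim_\F\D$ is precisely what converts $\dim_\F$ into $\dim_\D$ and gives the correct multiplicity. I would therefore be careful to state \cite[Proposition 2.4.3]{B} in its $\D$-dimensional form, so that both sides of the final inequality are genuine multiplicities. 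Apart from this normalization, the proof is essentially a one-line consequence of the two cited results, so no delicate estimates are needed—the content lies entirely in correctly identifying the second Loewy layer with $H_*^1(L,S)$.
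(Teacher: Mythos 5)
Your proposal is correct and is essentially the paper's own proof: both rest on \cite[Proposition 2.4.3]{B} to identify the second Loewy layer multiplicity with $\ext_{u(L)}^1(\F,S)=H_*^1(L,S)$, and then on Theorem~\ref{pabsplit} together with the nonnegativity of $\dim H_*^1(L/\ann_L(S),S)$. The only (immaterial) difference is bookkeeping: you divide by $\dim_\F\End_L(S)$ at the start to work with $\dim_\D$ throughout, while the paper carries the factor $\dim_\F\End_L(S)$ through the chain of (in)equalities and cancels it at the end.
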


\begin{proof}
We obtain from \cite[Proposition 2.4.3]{B} and Theorem \ref{pabsplit} that
\begin{align*}
\dim_\F\End_L(S)&\cdot[\jac(u(L))P_L(\F)/\jac(u(L))^2P_L(\F):S]\\
&=\dim_\F\ext_{u(L)}^1(\F,S)=\dim_\F H_*^1(L,S)\\
&\ge\dim_\F H_*^1(L,S)-\dim_\F H_*^1(L/\ann_L(S),S)\\
&=\dim_\F\End_L(S)\cdot[L:S]_{\rm p-split}\,.
\end{align*}
Cancelling $\dim_\F\End_L(S)$ yields the desired inequality.
\end{proof}

\noindent {\bf Remark.} If one uses the main result of \cite{S2} instead of Theorem
\ref{pabsplit}, then the above proof would also work in the case of finite-dimensional
modular group algebras. This provides an alternative proof of \cite[Theorem 3.7]{W}.
\vspace{.3cm}

The following example shows that equality does not necessarily hold in Theorem~\ref{loewybd}.
We will see soon that equality holds if, and only if, the restricted Lie algebra is solvable
(see the equivalence (i)$\Longleftrightarrow$(ii) in Theorem \ref{LLPIM}).
\vspace{.3cm}

\noindent {\bf Example.} Consider the three-dimensional restricted simple Lie algebra
$L:=\mathfrak{sl}_2(\F)$ over an algebraically closed field $\F$ of characteristic $p>2$.
Take for $S$ the $(p-1)$-dimensional irreducible restricted $L$-module. Then it follows
from \cite[Theorem~1(ii)]{P} that $[\jac(u(L))P_L(\F)/\jac(u(L))^2P_L(\F):S]=2$, but
$[L:S]_{\rm p-split}=0$.
\vspace{.0001cm}

As an immediate consequence of Theorem \ref{loewybd}, we obtain the following weak
analogue of a well-known result for finite modular group algebras:

\begin{cor}\label{psplitsolv}
Every split strongly abelian $p$-chief factor of a finite-dimensional restricted Lie algebra
$L$ is a direct summand of the second Loewy layer of the projective cover $P_L(\F)$ of the
trivial irreducible $L$-module. In particular, every split strongly abelian $p$-chief factor of a
finite-dimensional restricted Lie algebra $L$ is a composition factor of $P_L(\F)$.
\end{cor}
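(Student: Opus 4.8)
The plan is to derive Corollary \ref{psplitsolv} directly from Theorem \ref{loewybd}, using the standard correspondence between projective indecomposable modules and irreducible modules together with the structural interpretation of the second Loewy layer. First I would recall that the second Loewy layer $\jac(u(L))P_L(\F)/\jac(u(L))^2P_L(\F)$ of the projective cover $P_L(\F)$ is a semisimple $u(L)$-module, since it is a module over the semisimple quotient $u(L)/\jac(u(L))$. Consequently any irreducible module $S$ appearing in it with positive multiplicity is in fact a direct summand of that layer. Theorem \ref{loewybd} guarantees that for each split strongly abelian $p$-chief factor $S$ of $L$ the multiplicity $[\jac(u(L))P_L(\F)/\jac(u(L))^2P_L(\F):S]$ is at least $[L:S]_{\rm p-split}\ge 1$, so $S$ occurs in the second Loewy layer, hence is a direct summand of it.

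Next I would address the final assertion about composition factors. Since the second Loewy layer is by definition a subquotient of the filtration of $P_L(\F)$ by powers of $\jac(u(L))$, any composition factor of the second Loewy layer is automatically a composition factor of $P_L(\F)$ itself. Therefore every split strongly abelian $p$-chief factor $S$, being (by the first part) a direct summand and in particular a composition factor of the second Loewy layer, is a composition factor of $P_L(\F)$. This completes the logical chain.

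I do not anticipate a serious obstacle here: the corollary is essentially a reformulation of the inequality in Theorem \ref{loewybd} combined with the fundamental semisimplicity of Loewy layers. The only point requiring a moment of care is the semisimplicity of the second Loewy layer, which follows because $\jac(u(L))\cdot(\jac(u(L))P_L(\F)/\jac(u(L))^2P_L(\F))=0$, so the layer is annihilated by the radical and is thus a module over the semisimple algebra $u(L)/\jac(u(L))$. Given semisimplicity, ``occurs as a composition factor with positive multiplicity'' and ``is a direct summand'' coincide, and both follow from $[L:S]_{\rm p-split}\ge 1$ for any such factor $S$.
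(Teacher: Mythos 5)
Your proposal is correct and is exactly the argument the paper intends: the corollary is stated as an immediate consequence of Theorem \ref{loewybd}, namely that $[L:S]_{\rm p-split}\ge 1$ forces positive multiplicity of $S$ in the second Loewy layer, which is semisimple (being annihilated by $\jac(u(L))$), so $S$ is a direct summand there and hence a composition factor of $P_L(\F)$. Your explicit justification of the semisimplicity of the layer and of the passage from subquotient to composition factor fills in precisely the routine details the paper leaves implicit.
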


\noindent {\bf Question.} In view of Corollary \ref{psplitsolv}, it is natural to ask whether
every strongly abelian $p$-chief factor of a finite-dimensional solvable restricted Lie
algebra $L$ is a composition factor of $P_L(\F)$, or even more generally (see Proposition
\ref{pchiefpriblo}), whether every irreducible module in the principal block of $u(L)$ is a
composition factor of $P_L(\F)$  (for an affirmative answer to the analogous question in
the modular representation theory of finite $p$-solvable groups see \cite[Theorem VII.15.8]{HB}).
\vspace{.3cm}

Finally, we obtain the following characterization of solvable restricted Lie algebras which
was motivated by \cite[Theorem 5.5]{FSW} but contrary to the latter allows to include
the trivial irreducible module in the implications (i)$\Longrightarrow$(ii) and (i)$\Longrightarrow$(iii)
(see \cite[Theorem 3.9]{W} for the analogue of (i)$\Longleftrightarrow$(ii) in the modular
representation theory of finite groups).

\begin{thm}\label{LLPIM}
Let $L$ be a finite-dimensional restricted Lie algebra over a field $\F$ of prime characteristic
$p$. Then the following statements are equivalent:
\begin{enumerate}
\item [(i)]   $L$ is solvable.
\item [(ii)]  $[\jac(u(L))P_L(\F)/\jac(u(L))^2P_L(\F):S]=[L:S]_{\rm p-split}$ for every irreducible
                  restricted $L$-module $S$.
\item [(iii)] $[\jac(u(L))P_L(\F)/\jac(u(L))^2P_L(\F):S]=[L:S]_{\rm p-split}$ for every irreducible
                  restricted $L$-module $S$ belonging to the principal block of $L$.
\end{enumerate}
\end{thm}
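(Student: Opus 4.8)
The plan is to reduce Theorem \ref{LLPIM} entirely to Theorem \ref{charsolv} by extracting the precise identity that is hidden inside the proof of Theorem \ref{loewybd}. The first two lines of that proof, which rest on \cite[Proposition 2.4.3]{B}, in fact establish not merely an inequality but the exact equality
$$\dim_\F\End_L(S)\cdot[\jac(u(L))P_L(\F)/\jac(u(L))^2P_L(\F):S]=\dim_\F H_*^1(L,S)$$
for every irreducible restricted $L$-module $S$ (the middle term being $\dim_\F\ext_{u(L)}^1(\F,S)$). So the first step is simply to isolate this identity rather than the weaker inequality actually recorded in Theorem \ref{loewybd}.

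Next I would divide by the positive integer $\dim_\F\End_L(S)$ and observe that, for a fixed $S$, the equality in statement (ii) of Theorem \ref{LLPIM},
$$[\jac(u(L))P_L(\F)/\jac(u(L))^2P_L(\F):S]=[L:S]_{\rm p-split},$$
is thereby equivalent to
$$\dim_\F H_*^1(L,S)=\dim_\F\End_L(S)\cdot[L:S]_{\rm p-split}.$$
This last equation is precisely the condition appearing in statement (ii) of Theorem \ref{charsolv} for the same module $S$. Hence the two conditions agree module by module. Consequently condition (ii) of Theorem \ref{LLPIM} holds if, and only if, condition (ii) of Theorem \ref{charsolv} holds, and likewise condition (iii) of Theorem \ref{LLPIM} holds if, and only if, condition (iii) of Theorem \ref{charsolv} holds, since the reformulation is entirely on a per-module basis and restricting both sides to the irreducible modules in the principal block preserves the equivalence.

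Finally I would invoke Theorem \ref{charsolv}, which already furnishes the chain (i)$\Longleftrightarrow$(ii)$\Longleftrightarrow$(iii) for its own conditions, to transport it to the corresponding chain for Theorem \ref{LLPIM}. I do not expect a genuine obstacle: the whole argument is a bookkeeping translation, and the only point deserving a moment's care is to verify that \cite[Proposition 2.4.3]{B} yields the \emph{equality} of the second-Loewy-layer multiplicity with $\dim_\F H_*^1(L,S)/\dim_\F\End_L(S)$, and not just the inequality that Theorem \ref{loewybd} quotes. As an optional remark one may note, using Proposition \ref{pchiefpriblo} together with the vanishing of $H_*^1(L,S)$ for $S$ outside the principal block, that both sides of (ii) vanish automatically for such $S$; this explains directly why (ii) and (iii) are equivalent.
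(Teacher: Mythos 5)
Your proposal is correct and matches the paper's own proof: the authors likewise deduce Theorem \ref{LLPIM} directly from Theorem \ref{charsolv} together with the exact identity $\dim_\F\End_L(S)\cdot[\jac(u(L))P_L(\F)/\jac(u(L))^2P_L(\F):S]=\dim_\F H_*^1(L,S)$ coming from \cite[Proposition 2.4.3]{B}, exactly as you do. Your observation that this identity (rather than the weaker inequality stated in Theorem \ref{loewybd}) is the essential point, and that the translation between the two theorems works module by module and hence also block by block, is precisely the content of the paper's one-line argument.
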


\begin{proof}
The equivalence of the three statements  is a consequence of Theorem \ref{charsolv}  in conjunction
with $$\dim_\F\End_L(S)\cdot[\jac(u(L))P_L(\F)/\jac(u(L))^2P_L(\F):S]=\dim_\F H_*^1(L,S)\,.$$
\end{proof}

\noindent {\bf Remark.} It follows from the proof of Theorem \ref{loewybd} that the equality
in statements (ii) and (iii) of Theorem \ref{LLPIM} holds for the trivial irreducible $L$-module.
Hence one can also obtain Theorem \ref{LLPIM} from Corollary \ref{split} and the equivalence
of (i), (viii), and (ix) in \cite[Theorem 5.5]{FSW}.
\vspace{.3cm}

\noindent {\bf Acknowledgements.} The first and the second author would like to thank the Dipartimento
di Matematica e Applicazioni at the Universit\`a degli Studi di Milano-Bicocca for the hospitality during
their visit in May 2012 when parts of this paper were written.




\begin{thebibliography}{99}


\bibitem{Ba}
D.\ W.\ Barnes:
First cohomology groups of soluble Lie algebras,
{\it J.\ Algebra\/} {\bf 46} (1977), no.\ 1, 292--297.

\bibitem{B}
D. J. Benson:
{\it Representations and Cohomology I: Basic Representation Theory of Finite Groups and Associative Algebras\/},
Cambridge Studies in Advanced Mathematics, vol.\ {\bf 30},
Cambridge University Press, Cambridge, 1991.

\bibitem{F1}
J.\ Feldvoss:
On the cohomology of restricted Lie algebras, 
{\it Comm.\ Algebra\/} {\bf 19} (1991), no.\ 10, 2865--2906.

\bibitem{F2}
J.\ Feldvoss:
On the block structure of supersolvable restricted Lie algebras,
{\it J.\ Algebra\/} {\bf 183} (1996), no.\ 2, 396--419.

\bibitem{F3}
J.\ Feldvoss:
On the cohomology of modular Lie algebras,
in: {\it Lie Algebras, Vertex Operator Algebras and Their Applications, Raleigh, NC, 2005\/}
(eds.\ Y.-Z.\ Huang and K.\ C.\ Misra),
Contemp.\ Math., vol.\ {\bf 442}, Amer.\ Math.\ Soc., Providence, RI, 2007, pp.\ 89--113.

\bibitem{FSW}
J.\ Feldvoss, S.\ Siciliano, and T. Weigel:
Split abelian chief factors and first degree cohomology for Lie algebras,
arXiv:1206.3669 (12 pages),  accepted for publication in {\it J.\ Algebra\/}.

\bibitem{HS}
 P.\ J.\ Hilton and U.\ Stammbach:
{\it A Course in Homological Algebra\/} (Second edition),
Graduate Texts in Mathematics, vol.\ {\bf 4},
Springer-Verlag, New York/Berlin/Heidelberg, 1997.

\bibitem{H}
G.\ Hochschild:
Cohomology of restricted Lie algebras,
{\it Amer.\ J.\ Math.\/} {\bf 76} (1954), no.\ 3, 555--580.

\bibitem{HB}
B.\ Huppert and N.\ Blackburn:
{\it Finite Groups II\/},
Grundlehren der Mathematischen Wissenschaften, vol.\ {\bf 242},
Springer-Verlag, Berlin/Heidelberg/New York, 1982.

\bibitem{J} 
N.\ Jacobson:
{\it Lie Algebras\/}, 
Dover Publications, Inc., New York, 1979 
(unabridged and corrected republication of the original edition from 1962).

\bibitem{P}
R.\ D.\ Pollack:
Restricted Lie algebras of bounded type,
{\it Bull.\ Amer.\ Math.\ Soc.\/} {\bf 74} (1968), no.\ 2, 326--331.

\bibitem{S1}
U.\ Stammbach:
Cohomological characterisations of finite solvable and nilpotent groups,
{\it J.\ Pure Appl.\ Algebra\/} {\bf 11} (1977/78), no.\ 1-3, 293--301.

\bibitem{S2}
U.\ Stammbach:
Split chief factors and cohomology,
{\it J.\ Pure Appl.\ Algebra\/} {\bf 44} (1987), no.\ 1-3, 349--352. 

\bibitem{SF}  
H.\ Strade and R.\ Farnsteiner: 
{\it Modular Lie Algebras and Their Representations\/}, 
Monographs and Textbooks in Pure and Applied Mathematics, vol.\ {\bf 116}, 
Marcel Dekker, Inc., New York/Basel, 1988.

\bibitem{W}
W.\ Willems:
On $p$-chief factors of finite groups,
{\it Comm.\ Algebra\/} {\bf 13} (1985), no.\ 11, 2433--2447.


\end{thebibliography}
\end{document}